\documentclass[11pt]{amsart}
\usepackage{amssymb, latexsym, amsmath, amsfonts, tikz}
\usepackage{hyperref, enumitem, fancyhdr}
\usepackage{color}
\usepackage{tikz-cd} 

\newtheorem{thm}{Theorem}[section]
\newtheorem{cor}[thm]{Corollary}
\newtheorem{lem}[thm]{Lemma}

\theoremstyle{definition}

\theoremstyle{remark}
\newtheorem{rem}[thm]{Remark}
\numberwithin{equation}{section}
\theoremstyle{remark}

\newcommand{\no}{\noindent}

\begin{document}
\title{On automorphisms of super-level sets of Green's functions of H\'{e}non maps} 
%\keywords{H\'{e}non maps, escaping sets, automorphism}
\keywords{H\'{e}non maps, Green's functions, Escaping sets, Automorphisms}
\subjclass{Primary: 32H02  ; Secondary 32H50}
\author{Mahima and Ratna Pal}

\address{M: Indian Institute of Science Education and Research Mohali, Knowledge City, Sector -81, Mohali, Punjab-140306, India}
\email{ph22080@iisermohali.ac.in, mahimamath06@gmail.com}
\address{RP: Theoretical Statistics and Mathematics Unit,
Indian Statistical Institute, Bangalore Centre, 8th Mile Mysore Road, Bangalore, 560059.}
\email{ratna.math@gmail.com, ratnapal@isibang.ac.in}

\begin{abstract}
The aim of this article is two-fold. First, we obtain a normal form for automorphisms of the escaping sets of H\'{e}non maps in a neighborhood of the attracting fixed point at infinity. Building on this local description, we characterize the global automorphisms of $\mathbb{C}^2$ that preserve the escaping sets. The analytic structure of the escaping sets, as established by Hubbard--Oberste-Vorth (\cite{HOV}), plays a crucial role in the proof of these results.

\medskip 
\noindent
In a similar spirit, we investigate the analytic structure of the super-level sets of the Green's functions associated with H\'{e}non maps and give description of the automorphisms of these domains.
\end{abstract}

\maketitle

\section{Introduction}

In this article, we investigate several rigidity properties of H\'{e}non maps. These are polynomial automorphisms of $\mathbb{C}^2$ of the form 
\begin{equation} \label{Henon def}
H(x,y)=(y,p(y)-\delta x), 
\end{equation}
where $(x,y)\in \mathbb{C}^2$, $p$ is a polynomial of degree $d\geq 2$, and $\delta\neq 0$. Up to conjugacy, H\'{e}non maps constitute the only class of dynamically interesting polynomial automorphisms (see \cite{FM}), and consequently play a central role in higher dimensional holomorphic dynamics. 

\medskip 
\noindent
The present article is concerned with two dynamically significant domains associated with H\'{e}non maps and the corresponding rigidity phenomena: the {\it escaping sets} of H\'{e}non maps and the {\it super-level sets} of the {\it Green's functions} of H\'{e}non maps. Recall that escaping set (forward) of a given H\'{e}non map $H$ is defined by
\[
U^{+}_H =\left \{(x, y) \in \mathbb C^2 : \lVert H^n(x,y)\rVert \rightarrow \infty\right\},
\]
while the associated Green's function (forward) is given by 
\begin{equation}\label{green def}
    G_H^+(x,y)=\lim_{n\to \infty} \frac{1}{d^n}\log^+\lVert H^n(x,y)\rVert,
\end{equation}
where $\log^+ \vert y\vert =\max\{\log \vert y\vert ,0\}$. We next introduce several additional dynamical objects that play an essential role in  the rigidity questions considered in this article. The {\it non-escaping set} (forward) of a H\'{e}non map $H$ is defined by 
\[
K^{+}_H =\left \{(x, y) \in \mathbb C^2 : {\{H^n(x,y)\}}_{n\geq 1} < \infty
\right\}.
\]
It can be verified easily that for a given H\'{e}non map, the escaping sets and the non-escaping sets are complements of one another. 
The {\it Julia set} $J_H^+$ is defined as the boundary of the non-escaping set $K_H^+$, namely, 
$$
J_H^+= \partial K_H^+.
$$ 
A standard normal family argument shows that the dynamical instability of H\'{e}non maps take place precisely on its Julia set. An alternative and particularly useful description of the escaping sets and the non-escaping sets is provided by the Green's function. More precisely,  
$$
U_H^+=\{G_H^+>0\},
$$
whereas
$$
K_H^+=\{G_H^+=0\}.
$$ 

\medskip 
\noindent
The first  rigidity question addressed in this article is the following: given an automorphism $f$ of the escaping set $U_H^+$ of a H\'{e}non map $H$, can we obtain an explicit description of $f$? This question can be viewed of as a natural extension of the main results in \cite{BPV} and \cite{Pal1}. In \cite{BPV}, the authors obtain an explicit description of automorphisms $f$ of $\mathbb{C}^2$ satisfying  
$$
f(U_H^\pm)=U_H^\pm. 
$$ 
Here the set $U_H^-$ can be defined analogously as $U_H^+$, i.e.,
\[
U_H^-=\{(x,y)\in \mathbb{C}^2: \|H^{-n}(x,y)\|\to \infty\}.
\]
In \cite{Pal1}, an explicit description of $f$ satisfying
 $$
 f(U_H^+)=U_H^+
 $$ is obtained when $f$ is a polynomial automorphism of $\mathbb{C}^2$. The methods employed in these works are essentially global in nature and therefore cannot be applied when $f$ is assumed to be only an automorphism of the escaping set.  In contrast, the rigidity problems considered here is closer in spirit to the unpublished work of Bousch \cite{Bousch}, which investigates the automorphism group of the escaping set of quadratic H\'{e}non maps, as well as to the results  \cite{BRT} and \cite{Pal} giving relation between H\'{e}non maps sharing biholomorphic escaping sets. 
 
\medskip 
\noindent  
 A detailed understanding of the analytic structure of the escaping set $U_H^+$ developed by Hubbard and Oberste-Vorth, plays a fundamental role in all of these works. We briefly recall their construction. The fundamental group of $U_H^+$ is 
 $$
\pi_1(U_H^+)=\mathbb{Z}[1/d], 
 $$
 and  the escaping set $U_H^+$ can be realized as a quotient of 
$\mathbb{C}\times (\mathbb{C}\setminus{\overline{\mathbb{D}}})$ by a discrete subgroup of its automorphism group isomorphic to ${\mathbb{Z}[1/d]}/ {\mathbb{Z}}$  (see \cite{HOV}). The construction relies crucially on the B\"{o}ttcher function
\[
\phi_H^+:V_R^+ \rightarrow \mathbb{C}\setminus \overline{\mathbb{D}},
\]
where $R>0$ is large enough, and
% were used crucially to come up with this description of $U_H^+$.  Note that the B\"{o}ttcher function $\phi_H^+$ is defined only on a certain neighbourhood of $[0:1:0]$, namely on 
\[
V_R^+=\left\{(x,y)\in \mathbb{C}^2 : \lvert y \rvert>\max\{R, \lvert x\rvert\}\right\} \subset U_H^+.
\]
Although $\phi_H^+$ does not extend analytically to all of $U_H^+$,  it admits analytic continuation along every curve in $U_H^+$ starting in $V_R^+$. Consequently, it defines a multi-valued analytic function on $U_H^+$. Let $\hat{U}_H^+$ be the Riemann domain of this multi-valued function, and let 
$$
\hat{\phi}_H^+: \hat{U}_H^+\rightarrow \mathbb{C}\setminus \overline{\mathbb{D}}.
$$
be the single-valued analytic function, which arises by considering infinitely many branches of $\phi_H^+$. 
It follows from the construction of Hubbard and Oberste--Vorth that $\hat{U}_H^+$ is biholomorphically equivalent to the domain $\{(z,\zeta)\in \mathbb{C}^2:|\zeta|>1\}$. Further, in this new coordinate each copy of the complex plane is obtained by straightening out the level set $\{\hat{\phi}_H=\zeta\}$. 

\medskip 
\noindent
The above description enable us to establish the following normal form for automorphisms of the escaping set. 
\begin{thm}\label{Thm2}
Let $H$ be a H\'{e}non map defined in (\ref{Henon def}). Let 
$$
f=(f_1,f_2):U_H^+ \to U_H^+
$$
be an automorphism inducing identity on $\pi_1(U_H^+)$, then for $0<\epsilon<1$, and a large $\rho_1>1$, 
\[
f(x,y)=\left(a_1 x+b_1 y+c_1+\sum_{n=1}^{\infty}\frac{\alpha_n(x)}{y^n},a_2 x+b_2 y+c_2+\sum_{n=1}^{\infty}\frac{\beta_n(x)}{y^n}\right),
\]
on $\mathcal{B}_1=\{(x,y)\in \mathbb{C}^2: |x|<\epsilon, |y|>2\rho_1\}$, where $a_1,a_2,b_1,b_2,c_1,c_2 \in \mathbb{C}$, and for $n\geq 1$, $\alpha_n$ and $\beta_n$ are polynomials in one variable $x$ of degree at most $n+1$. 
\end{thm}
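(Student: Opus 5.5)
We lift $f$ to the Hubbard--Oberste-Vorth model, identify the lift there, and push that description back down near $[0:1:0]$. Since $f$ induces the identity on $\pi_1(U_H^+)=\mathbb{Z}[1/d]$, it lifts to an automorphism $\hat f$ of $\hat U_H^+\cong\Omega=\{(z,\zeta):|\zeta|>1\}$. This lift commutes with the deck group $\mathbb{Z}[1/d]/\mathbb{Z}$. The function $\zeta\mapsto \hat f$ restricted to a fiber $\{\zeta=c\}\cong\mathbb{C}$ has values in $\Omega$, so its second coordinate is a bounded-below holomorphic function of $z$. By Liouville applied to $1/\zeta$, this coordinate is constant along fibers. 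Hence $\hat f$ preserves the fibration, and
\[
\hat f(z,\zeta)=(A(\zeta)z+B(\zeta),\,g(\zeta)),
\]
where $g$ is an automorphism of $\mathbb{C}\setminus\overline{\mathbb{D}}$ and each fiber map is affine because it is an automorphism of $\mathbb{C}$. Commuting with the deck transformations, which act by roots of unity of order $d^k$ on $\zeta$, forces $g(\zeta)=\lambda\zeta$ with $|\lambda|=1$. It also forces compatibility conditions on $A$ and $B$, which I will use only qualitatively.

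Next I pass to the local description on $V_R^+$. There $\phi_H^+(x,y)=y\,(1+O(1/|y|))$. The straightening coordinate $z$ on the fiber $\{\phi_H^+=\zeta\}$ can be chosen, via the Hubbard--Oberste-Vorth construction, as a function of the form
\[
z=\zeta^{-?}\cdot x\,(1+\cdots)+\cdots,
\]
holomorphic in $(x,1/y)$ for $|x|<\epsilon$ and $|y|$ large. Concretely, I will use that the inverse of the Böttcher coordinate chart on $V_R^+$ is given by expansions
\[
x=\sum_n \frac{p_n(z)}{\zeta^n},\qquad y=\zeta+\sum_n\frac{q_n(z)}{\zeta^n},
\]
with polynomial coefficients of controlled degree. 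These come from comparing $H^{-1}$ with its leading term and iterating the functional equation $\phi^+\circ H=(\phi^+)^d$. Composing these expansions gives
\[
f=\Phi^{-1}\circ\hat f\circ\Phi \quad\text{on }\mathcal{B}_1,
\]
where $\Phi$ is the local chart. For $|x|<\epsilon$ and $|y|>2\rho_1$ the image stays in a region where the local inverse expansions still converge, because the second coordinate of $\hat f$ is $\lambda\zeta$. Hence $f$ is holomorphic in $(x,1/y)$ with at most a simple pole in $y$.

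The main obstacle is the growth of $A(\zeta)$ and $B(\zeta)$ as $\zeta\to\infty$, and the resulting degree bound $\deg\alpha_n,\deg\beta_n\le n+1$. I would first bound these growths using the fact that $f$ is an automorphism, applying the same argument to $f^{-1}$. This shows that $A$ and $B$ have at most polynomial growth and hence are Laurent polynomials at $\infty$ of bounded degree. The deck-invariance then pins down which powers survive, leaving the pole order in $y$ at most one in each component. Writing the Laurent expansion $f_i=\sum_{n\ge -1} c_{i,n}(x)y^{-n}$ in the Hartogs domain $\mathcal{B}_1$, I would compare it with the weighted homogeneity. Here $x$ has weight comparable to that of $y$ through the expansion of $z$, so each coefficient $c_{i,n}$ is a polynomial whose degree is bounded by its weight, which gives degree at most $n+1$. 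In particular the $y^{1}$ and $y^{0}$ coefficients are affine in $x$, producing $a_ix+b_iy+c_i$. Verifying this weight bookkeeping carefully is the step I expect to require the most work.
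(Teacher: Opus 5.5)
\textbf{There is a genuine gap.} It sits in the step where you say that, for $(x,y)\in\mathcal{B}_1$, the point $\hat f(\Phi(x,y))$ stays where the local inverse of the chart is valid ``because the second coordinate of $\hat f$ is $\lambda\zeta$''. The chart $\Phi_H^+=(\psi_H^+,\phi_H^+)$ on $U_R^+$ is only known to cover a region $\{|s|<\rho_2|t|^2,\ |t|>\rho_1\}$. So the first coordinate $A(\zeta)z+B(\zeta)$ must be controlled too, and for an arbitrary lift it is not.

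Take $f=\mathrm{id}$. The deck transformation $\gamma_{k/d^n}$, with $n\ge 1$ and $k$ not divisible by $d$, is a lift of $f$ with $A\equiv 1$ and $B$ a polynomial of degree $d^{n-1}(d+1)>2$. It sends $\Phi_H^+(\mathcal{B}_1)$ entirely outside $\Phi_H^+(U_R^+)$; otherwise injectivity of $\hat\Phi_H^+$ would force $\gamma_{k/d^n}$ to fix points. So the lift must be chosen and its coefficients pinned down exactly; bounding them is not enough.

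Your proposed growth bound also fails. That $A$ and $1/A$ are both holomorphic on $\{|\zeta|>1\}$ imposes no growth restriction: a priori $A=e^{\zeta}$ is allowed, and running the argument for $f^{-1}$ adds nothing. Even polynomial growth of positive degree would push $A(\zeta)z$, with $|z|\approx\epsilon|\zeta|$, out of the chart.

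\textbf{The missing ingredient} is a quantitative use of the deck group, which is Theorem~\ref{lift of UH+} in the paper.
\begin{enumerate}
\item Since $\hat f\gamma_{k/d^n}\hat f^{-1}$ is a deck transformation rotating $\zeta$ by the same angle, it equals $\gamma_{k/d^n}(s,t)=(s+P_{k,n}(t),e^{2\pi ik/d^n}t)$, where $P_{k,n}$ is an explicit polynomial built from $Q$. Hence $\hat f$ commutes with every $\gamma_{k/d^n}$.
\item Comparing coefficients of $s$ gives $A(e^{2\pi ik/d^n}\zeta)=A(\zeta)$ for a dense set of rotations, so $A\equiv\beta$ is constant.
\item Comparing top-degree coefficients in the remaining identity for $B$ gives $\lambda^{d^n(d+1)}\to\beta$. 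Hence $\beta^{d-1}=1$, and since $\beta$ is a repelling fixed point of $w\mapsto w^d$, $\lambda^{d^n(d+1)}=\beta$ for all large $n$.
\item Composing with a suitable deck transformation gives $\lambda^{d+1}=\beta$. The polynomial terms then cancel and $B(\zeta)=B(e^{2\pi ik/d^n}\zeta)$, so $B$ is a constant $b$.
\end{enumerate}

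With the lift $(s,t)\mapsto(\beta s+b,\lambda t)$, the inclusion $f(\mathcal{B}_1)\subset U_R^+$ follows from $|\beta s+b|\le|s|+|b|$ and $s\approx xy+\kappa$.

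From here the paper uses $G_H^+\circ f=G_H^+$ and $G_H^+=\log|y|+O(1)$ on $V_R^+$ to get $|f|=O(|y|)$. Cauchy estimates in $x$, on discs whose radius grows with $|y|$, then give the degree bound.

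Your weighted-homogeneity endgame would also work at that point, and gives slightly more. In the coordinates $(x/y,1/y)$ at $[0:1:0]$, the map $(\psi_H^+/(\phi_H^+)^2,\,1/\phi_H^+)$ is a local biholomorphism. The normalized lift becomes $(U,V)\mapsto(\beta\lambda^{-2}U+b\lambda^{-2}V^2,\lambda^{-1}V)$, so $f$ is a holomorphic germ fixing $[0:1:0]$. Writing $f_i=y\sum c_{jk}(x/y)^j y^{-k}$ gives degree at most $n+1$ for the coefficient of $y^{-n}$. However, you only assert the chart expansions you rely on, so that part also needs a proof.
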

 
\noindent
When $f$ extends to an automorphism of $\mathbb{C}^2$, that is, when 
$$
f\in {\rm{Aut}}(U_H^+)\cap {\rm{Aut}}(\mathbb{C}^2),
$$
we obtain an explicit description of $f$.  In this setting, we do not require the assumption that $f$ induces the identity on $\pi_1(U_H^+)$. The latter hypothesis in Theorem~\ref{Thm2} is purely technical and is needed to lift $f$ to an automorphism of
$
\mathbb C\times\left(\mathbb C\setminus\overline{\mathbb D}\right).
$
When $f$ is already a global automorphism of $\mathbb C^2$, this lifting assumption can be bypassed by using existing rigidity results (see the proof of Theorem~\ref{Thm1} for details).

\begin{thm}\label{Thm1}\label{global aut escaping}
Let $H$  be a H\'{e}non map of degree $d\geq 2$ as defined in (\ref{Henon def}).  Let $f$ be an automorphism of $\mathbb{C}^2$ such that $f$ induces an automorphism of $U_H^+$. 
Then $f$ is a polynomial automorphism such that
\[
f^{s_1}=L\circ H^{s_2},
\] for some $s_1, s_2 \in \mathbb{Z}$ and linear map $L$ of the form $(x,y) \rightarrow (a x,a^d y)$ with $a^{d^2-1}=1$. 
\end{thm}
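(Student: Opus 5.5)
Two steps: first show that $f$ is a polynomial automorphism, then apply the known rigidity result of \cite{Pal1} (building on \cite{BPV}) for polynomial automorphisms preserving $U_H^+$.

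\medskip
\noindent
\textbf{Step 1: $f$ is polynomial.} The group $\pi_1(U_H^+)=\mathbb{Z}[1/d]$ has the abelian automorphism group $\mathbb{Z}[1/d]^\times$, generated by $-1$ and the primes dividing $d$; each such automorphism is multiplication by some $\pm d_1^{k}$ with $d_1\mid d$ a product of primes. The induced map $H_*$ is multiplication by $d$. So I will try to replace $f$ by $g=f^{m}\circ H^{k}$ or a similar composition that induces the identity on $\pi_1(U_H^+)$. This is the delicate point. If $f_*$ is multiplication by a unit $u$ not a power of $d$, I would use the Green's function: $G_H^+\circ f$ is pluriharmonic-comparable to $G_H^+$ on $U_H^+$. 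I would show $G_H^+\circ f=c\,G_H^+$ by uniqueness of the foliation of $U_H^+$ by level sets of $\hat\phi_H^+$, and relate $c$ to $u$ through the monodromy of $\phi_H^+$ along a loop of the form $\{|y|=r\}$. Then $c=d^{s}$ forces $u=\pm d^s$, and the sign can be removed by passing to $f^2$.

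Once some $g=f^{s_1}\circ H^{-s_2}$ acts trivially on $\pi_1$, Theorem \ref{Thm2} applies. Since $g$ is a global automorphism of $\mathbb{C}^2$, the Laurent expansion on $\mathcal{B}_1$ is the restriction of an entire map. For each fixed $x$ with $|x|<\epsilon$, the map $y\mapsto g(x,y)$ is entire and has at most a simple pole at $y=\infty$. Hence the coefficients $\alpha_n(x)$ and $\beta_n(x)$ vanish, and $g$ is affine in $y$ on $\{|x|<\epsilon\}$. By the identity theorem $g(x,y)=(A(x)+b_1y,\,B(x)+b_2y)$ globally, with $A,B$ entire. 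Applying the same argument to $g^{-1}$, which also preserves $U_H^+$, and using bijectivity, I expect to force $A$ and $B$ to be affine or at least polynomial: a Jacobian constant together with invertibility of a map of this shape gives a polynomial automorphism. It then follows that $f^{s_1}$ is polynomial.

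\medskip
\noindent
\textbf{Step 2: rigidity.} For a polynomial automorphism $g$ with $g(U_H^+)=U_H^+$, \cite{Pal1} gives the form $g^{t_1}=L\circ H^{t_2}$ with $L(x,y)=(ax,a^dy)$. Commuting $L$ past $H$ imposes a condition on $a$. Combining the powers yields $f^{s_1}=L\circ H^{s_2}$; the relation $a^{d^2-1}=1$ comes from $L$ having to preserve the form of $H$ up to the normalization. If \cite{Pal1} only treats $f$ itself, I would apply it directly to $f$ once $f$ is known to be polynomial. Showing polynomiality of $f$ itself, rather than of a power, should follow from $f^{s_1}$ being polynomial together with the triangular, affine-in-$y$ structure.

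\medskip
\noindent
\textbf{Main obstacle.} I expect the main obstacle to be Step 1: reducing to an automorphism that is the identity on $\pi_1$, which requires controlling $f_*$ via the Green's function scaling, and then extracting global polynomiality from the local normal form.
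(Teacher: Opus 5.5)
Your Step~1 has a genuine gap exactly where you flag it. You try to produce $g=f^{m}\circ H^{k}$ acting trivially on $\pi_1(U_H^+)$ by showing that $f_*$ is multiplication by $\pm d^{s}$, but nothing in your argument gives this.

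Suppose you grant $G_H^+\circ f=c\,G_H^+$. Note that $\omega=2\partial G_H^+$ on all of $U_H^+$: this holds on $V_R^+$, where $G_H^+=\log|\phi_H^+|$, and it propagates via $H^*\omega=d\,\omega$. Hence $f^*\omega=c\,\omega$, so $c$ is just the multiplier $u$ of $f_*$. That only says $u$ is a positive unit of $\mathbb{Z}[1/d]$, i.e.\ a product of powers of the primes dividing $d$. Your argument does not exclude, for instance, $d=6$ and $u=2$. In that case $(f^{m}\circ H^{k})_*$ is multiplication by $2^{m+k}3^{k}$, which is trivial only for $m=k=0$. Proving $u^{m}=d^{k}$ with $m\neq 0$ is essentially the conclusion of the theorem, so this route is circular.

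There is also a second gap. Polynomiality of a power $f^{s_1}$ does not imply polynomiality of $f$: for example, $(x,y)\mapsto(-x,\,y+e^{x}-e^{-x})$ is a transcendental involution. Moreover, the ``triangular, affine-in-$y$ structure'' you invoke belongs to $g$, not to $f$.

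The missing idea is one step from your own observation that $\mathrm{Aut}(\mathbb{Z}[1/d])$ is abelian. Because of this, the commutator $g=f\circ H\circ f^{-1}\circ H^{-1}$ acts trivially on $\pi_1(U_H^+)$ whatever $u$ is. Theorem~\ref{Thm2} then applies to $g$. Your argument is correct that for fixed $x$ the entire map $y\mapsto g(x,y)$ forces $\alpha_n=\beta_n=0$, and it shows directly that $g$ is affine, since the normal form already has $a_1x+c_1$ and $a_2x+c_2$.

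Hence $f\circ H\circ f^{-1}=g\circ H$ is a polynomial automorphism. The paper applies the main theorem of \cite{CD} to the relation $f\circ H=(g\circ H)\circ f$, which gives that $f$ itself is polynomial; this sidesteps the power issue entirely. Then \cite[Theorem 1.2]{Pal1} together with \cite[Proposition 3.1]{BPV1} yields $f^{s_1}=L\circ H^{s_2}$, as in your Step~2.
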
 

Theorem~\ref{Thm1} strengthens a rigidity result from an earlier version of \cite{BV}, where the corresponding statement was proved for a restricted class of H\'{e}non maps. During the preparation of this article, the authors of \cite{BV} informed us of an updated version of their work. In its current form, their theorem establishes the same conclusion as Theorem~\ref{Thm1} for compositions of generalized H\'{e}non maps. Although the statements are closely related, our proof is substantially different from that in \cite{BV}. Our approach is based on the observation that whenever an automorphism of $U_H^+$ admits a lift to the covering $\hat{U}_H^+$, it preserves a suitable neighborhood of the point $[0:1:0]$ in $\mathbb P^2$. This allows us to determine the precise form of the automorphism. In contrast, the proof in \cite{BV} relies on the fact that the subgroup
$
\mathrm{Aut}(U_H^+)\cap\mathrm{Aut}(\mathbb C^2)
$
consisting of automorphisms admitting lifts to
$
\mathbb C\times\left(\mathbb C\setminus\overline{\mathbb D}\right)
$
is a finite cyclic group whose order is determined by the degree of the underlying H\'{e}non map.

\medskip 
\noindent
We now record several consequences of Theorem~\ref{Thm1}.

%We set out to prove Theorem \ref{Thm1} to strengthen a rigidity result stated in an earlier version of \cite{BV}, which proves Theorem \ref{Thm1} for those H\'{e}non maps for which. During the preparation of the present article we were kindly informed by the authors of \cite{BV} about their improved version. Theorem in \cite{BV}  states the same as Theorem \ref{Thm1} for composition of generalised H\'{e}non maps.  
%Next we record a few interesting corollaries of Theorem \ref{Thm1}. However, note that the main idea of our proof deviates from the one in \cite{BV}. Our proof relies on the fact that whenever $f$ is an automorphism of $U_H^+$ which can be lifted as an automorphism of $\tilde{U}_H^+$,  certain neighbourhood of $[0:1:0]$ in $\mathbb{C}^2$ remains invariant under $f$. This fact enables us to get hold on the form of $f$. On the other hand, the proof in \cite{BV} is built on the fact that the subgroup $\rm{Aut}(U_H^+)\cap \rm{Aut}(\mathbb{C}^2)$ whose elements can be lifted to $\mathbb{C}\times (\mathbb{C}\setminus{\bar{\mathbb{D}}})$ is a finite cyclic group whose order is determined by the degree of the underlying H\'{e}non maps. 

\begin{cor}\label{Cor1}
Let $H$ and $F$ be a pair of H\'{e}non maps. Let $K_H^+$ and $K_F^+$ be the forward non-escaping sets of $H$ and $F$, respectively.  Let $f$ be an  automorphism of $\mathbb{C}^2$ such that $f(K_H^+)=K_F^+$, then $f$ is a polynomial automorphism such that  
\[
F^{s_1}\circ f=f \circ L\circ H^{s_2}, 
\] 
for some linear map $L(x,y)=(a x,a^d y)$ with $a^{d^2-1}=1$ and $s_1, s_2\in \mathbb{Z}$ of same sign.  
\end{cor}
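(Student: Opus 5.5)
The plan is to reduce the corollary to Theorem~\ref{Thm1} by conjugation. Since $U_H^+=\mathbb{C}^2\setminus K_H^+$ and $U_F^+=\mathbb{C}^2\setminus K_F^+$, the hypothesis $f(K_H^+)=K_F^+$ gives $f(U_H^+)=U_F^+$. Consider
\[
g:=f^{-1}\circ F\circ f\in \mathrm{Aut}(\mathbb{C}^2).
\]
Because $F(U_F^+)=U_F^+$, we get $g(U_H^+)=U_H^+$. Theorem~\ref{Thm1} applied to $g$ then gives integers $s_1,s_2$ and $L(x,y)=(ax,a^dy)$ with $a^{d^2-1}=1$ such that
\[
g^{s_1}=L\circ H^{s_2}.
\]
Substituting $g^{s_1}=f^{-1}\circ F^{s_1}\circ f$ yields $F^{s_1}\circ f=f\circ L\circ H^{s_2}$, which is the identity claimed.

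Next I will check that the exponents are nondegenerate and of the same sign. If $s_1=0$, then $L\circ H^{s_2}=\mathrm{id}$. A nonzero iterate of a H\'enon map has degree $d^{|s_2|}\ge 2$, so this forces $s_2=0$ and $L=\mathrm{id}$, and the statement is trivial. Since $g$ is conjugate to $F$, it has dynamical degree $\ge 2$ and no power of it is affine; so we may take $s_1\neq 0$, and then $s_2\neq 0$. For the signs, note that $L$ has finite order and $L\circ H\circ L^{-1}=\tilde H$ preserves $U_H^\pm$ (indeed $L$ preserves $V_R^\pm$ and $G_H^\pm\circ L=G_H^\pm$). Hence the forward escaping set of $L\circ H^{s_2}$ is $U_H^+$ when $s_2>0$ and $U_H^-$ when $s_2<0$. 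On the other hand, $f^{-1}$ carries the forward escaping set of $F^{s_1}$, which is $U_F^{\mathrm{sgn}(s_1)}$, onto the forward escaping set of $g^{s_1}$. When $s_1>0$ this set is $f^{-1}(U_F^+)=U_H^+$. Since $U_H^+\neq U_H^-$ (their complements $K_H^+\neq K_H^-$), this forces $s_2>0$. The case $s_1<0$ follows symmetrically by inverting the identity.

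The main obstacle is showing that $f$ itself is polynomial, since Theorem~\ref{Thm1} gives polynomiality of $g$, not of $f$. My first attempt will use Green's functions. From $F^{s_1}\circ f=f\circ L\circ H^{s_2}$ and the invariance $G_F^+\circ F=d\,G_F^+$, the function $u=G_F^+\circ f$ is continuous and plurisubharmonic, vanishes exactly on $K_H^+$, is pluriharmonic on $U_H^+$, and satisfies
\[
u\circ(L\circ H^{s_2})=d^{s_1}u.
\]
Comparing with $G_H^+\circ(L\circ H^{s_2})=d^{s_2}G_H^+$ and using the uniqueness of such invariant functions on $U_H^+$ (via the B\"ottcher coordinate / Hubbard--Oberste-Vorth structure, the pluriharmonic functions on $U_H^+$ vanishing on $J_H^+$ with this equivariance are multiples of $G_H^+$), I expect to get $d^{s_1}=d^{s_2}$ and $G_F^+\circ f=c\,G_H^+$. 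The same argument with $G^-$ (note that $f$ also conjugates the backward dynamics) should give $G_F^-\circ f=c'G_H^-$. Since $\max(G^+,G^-)=\log\|z\|+O(1)$ for H\'enon maps, we obtain
\[
\log\|f(z)\|\le C\log\|z\|+C,
\]
so $f$ is polynomial by the Liouville-type (Cauchy estimate) argument.

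If the uniqueness step for $G^-$ is problematic, I will instead argue through the point $[0:1:0]$. Applying the normal-form argument behind Theorem~\ref{Thm2} to the induced biholomorphism $U_H^+\to U_F^+$ should give linear growth of $f$ near $[0:1:0]$. Combining this with the identity $f=F^{-s_1}\circ f\circ L\circ H^{s_2}$ would then propagate polynomial growth to all of $\mathbb{C}^2$.
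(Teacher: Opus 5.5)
Your reduction is exactly the paper's proof: you conjugate to $g=f^{-1}\circ F\circ f$, apply Theorem~\ref{Thm1} to get $g^{s_1}=L\circ H^{s_2}$, and rule out opposite signs by comparing escaping sets. You are also right that Theorem~\ref{Thm1} makes $g$ polynomial, not $f$. The paper's proof of this corollary does not address that point.

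Your route to polynomiality of $f$ has a concrete gap, though. Put $\Lambda:=L\circ H^{s_2}$, so that $f\circ\Lambda=F^{s_1}\circ f$. Then $f$ intertwines $F^{s_1}$ with $\Lambda$, not with $H^{s_2}$, and for $s_1,s_2>0$ you only get $f^{-1}(K_F^\pm)=K_\Lambda^\pm$. Passing to $G_F^-\circ f=c'G_H^-$ requires $K_\Lambda^-=K_H^-$, i.e.\ $L(K_H^-)=K_H^-$. Neither the hypothesis nor Theorem~\ref{Thm1} gives this. They only show that $L=g^{s_1}\circ H^{-s_2}$ preserves $K_H^+$, whence $G_H^+\circ L=G_H^+$, because $G_H^+$ is the pluricomplex Green function of $K_H^+$ and $L$ is linear. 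The fact that $L$ preserves $V_R^\pm$ does not imply $G_H^-\circ L=G_H^-$, so the same unsupported claim also sits in your sign step. It also underlies the paper's identity $G^+_{L\circ H^{s_2}}\equiv G^+_{H^{s_2}}$ for $s_2<0$. Two smaller points: the ``uniqueness of equivariant pluriharmonic functions on $U_H^+$'' is not an available result as you state it, and the correct equivariance is $u\circ\Lambda=(\deg F)^{s_1}u$.

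Both issues can be fixed without any information about $K_H^-$.

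\emph{Signs.} Since $|a|=1$, $L$ preserves $V_R^\pm$ and $V_R$. Suppose $s_1>0>s_2$. Then $\Lambda$ maps $V_R^-$ into itself with escaping orbits, so $K_\Lambda^+\cap V_R^-=\emptyset$. On the other hand $K_\Lambda^+=K^+_{g^{s_1}}=f^{-1}(K_F^+)=K_H^+$, which is unbounded and contained in $V_R\cup V_R^-$, so it meets $V_R^-$. This is a contradiction. The case $s_1<0<s_2$ is the same argument with backward non-escaping sets.

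\emph{Polynomiality.} The shortest route is to apply, to $f^{-1}\circ F=g\circ f^{-1}$, the conjugacy-rigidity theorem of \cite{CD} that the paper already uses in the proof of Theorem~\ref{Thm1}; here $g$ is polynomial by Theorem~\ref{Thm1}. Alternatively, your Green-function idea works once you replace $G_H^\pm$ by $G_\Lambda^\pm$.
\begin{itemize}
\item[(i)] Take $s_1,s_2>0$ (invert the identity otherwise). Then $G_F^\pm\circ f$ is continuous and plurisubharmonic, vanishes exactly on $K_\Lambda^\pm$, and is pluriharmonic off it.
\item[(ii)] Forn\ae ss--Sibony uniqueness of positive closed currents supported on $K_\Lambda^\pm$, together with the argument in the proof of Corollary~\ref{Cor2.1}, gives $G_F^\pm\circ f=c_\pm G_\Lambda^\pm$.
\item[(iii)] Combine this with $G_\Lambda^\pm\le\log^+\|z\|+C$ and $\max(G_F^+,G_F^-)(w)\ge\log^+\|w\|-C$. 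This yields $\log^+\|f(z)\|\le C'\log^+\|z\|+C''$, so $f$ is polynomial.
\end{itemize}

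Your fallback via Theorem~\ref{Thm2} is too vague to rely on. For $H\neq F$ the two Hubbard--Oberste-Vorth covers carry different deck groups (they involve $Q_H,\delta_H$ versus $Q_F,\delta_F$, and possibly different degrees), so the lifting argument does not carry over as it stands.
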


\begin{cor}\label{Cor2.1}
    Let $H$ be a H\'{e}non map of degree $d\geq 2$, and let $f\in \mathrm{Aut}(\mathbb{C}^2)$ such that $f(J_H^+)=J_H^+$. Then, $f$ is a polynomial automorphism such that 
    \[
    f^{s_1}=L\circ H^{s_2}
    \]
    for $s_1,s_2\in \mathbb{Z}$, and $L(x,y)=(a x,a^d y)$, where $a^{d^2-1}=1$.
\end{cor}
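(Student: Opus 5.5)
Since $f\in\mathrm{Aut}(\mathbb{C}^2)$ satisfies $f(J_H^+)=J_H^+$, the plan is to show that $f$ preserves $U_H^+$ (possibly after replacing $f$ by $f^2$) and then apply Theorem~\ref{Thm1}. Recall that $J_H^+=\partial K_H^+=\partial U_H^+$, and that $K_H^+$ has no isolated structure beyond this: $\mathbb{C}^2\setminus J_H^+$ is the disjoint union of $U_H^+$ and $\mathrm{int}\,K_H^+$. Because $f$ is a homeomorphism of $\mathbb{C}^2$ preserving $J_H^+$, it permutes the connected components of $\mathbb{C}^2\setminus J_H^+$. The key observation is that $U_H^+$ is connected: it is an increasing union $\bigcup_n H^{-n}(V_R^+)$ of connected open sets, since $V_R^+\subset H^{-1}(V_R^+)$ and each $H^{-n}(V_R^+)$ is biholomorphic to the connected set $V_R^+$. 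So $U_H^+$ is a single component of $\mathbb{C}^2\setminus J_H^+$, and $f(U_H^+)$ is some component $W$.

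The main step is to exclude that $W$ is a component of $\mathrm{int}\,K_H^+$. I would distinguish the two by a Liouville-type or hyperbolicity invariant preserved under biholomorphism. First choice: $U_H^+$ has fundamental group $\mathbb{Z}[1/d]$, which is not finitely generated, while components of $\mathrm{int}\,K_H^+$ are Fatou components. That comparison is awkward, so instead I would use that $U_H^+$ carries the nonconstant positive pluriharmonic function $G_H^+$ with boundary values $0$ on $J_H^+$. If $f(U_H^+)=W\subset \mathrm{int}\,K_H^+$, then $u=G_H^+\circ f^{-1}$ would be a continuous plurisubharmonic function on $\mathbb{C}^2$ after extending it by $0$ off $W$. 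This extension is continuous because $u\to 0$ at $\partial W\subset J_H^+$, but continuity at $\partial W$ needs care: it requires $G_H^+(p)\to0$ as $f^{-1}(p)\to J_H^+$, which holds because $G_H^+$ is continuous and $f^{-1}$ maps $\partial W$ into $J_H^+$.

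A cleaner route is to use $f^{-1}$ and symmetry. Both $W$ and $U_H^+$ are components, and $f^{-1}(W)=U_H^+$. Now $G_H^+\circ f$ extended by zero is psh on $\mathbb{C}^2$, and $f^{-1}(K_H^+)$ is then unbounded in a way incompatible with $U_H^+$ containing $V_R^+$. I expect this step to be the main obstacle. My fallback is a growth argument: $K_H^+\subset\{|y|\le\max(R,|x|)\}$, whereas $W$ would contain $f(V_R^+)$, so the goal is to show that $f(V_R^+)$ meets $V_R^+$. Using the $\mathbb{Z}[1/d]$ fundamental group is probably easiest: $\pi_1$ of any Fatou component of a Hénon map is known in the relevant cases, but in general I would rely on the pluriharmonic argument. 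Namely, $u=G_H^+\circ f^{-1}$ on $W$, extended by $0$, is psh and continuous on $\mathbb{C}^2$, pluriharmonic on $W$, and vanishes on $\mathbb{C}^2\setminus W$, which contains $U_H^+$, an open set. Then $\mu=dd^c u$ is supported on $\partial W\subset J_H^+$. Comparing with the unique such current $\mu^+$ (Fornæss–Sibony: positive closed currents supported on $J_H^+$ are multiples of $dd^cG_H^+$) gives $u=cG_H^++h$ with $h$ pluriharmonic. Since $u=0$ on $U_H^+$ and $G_H^+>0$ there, $h=-cG_H^+$ on $U_H^+$; but $h$ is pluriharmonic on $\mathbb{C}^2$, and $G_H^+$ is not pluriharmonic across $J_H^+$, so $c=0$. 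Hence $u$ is pluriharmonic on $\mathbb{C}^2$ and vanishes on an open set, so $u\equiv 0$, a contradiction.

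Therefore $f(U_H^+)=U_H^+$, and Theorem~\ref{Thm1} yields that $f$ is a polynomial automorphism with $f^{s_1}=L\circ H^{s_2}$ and $L(x,y)=(ax,a^dy)$, $a^{d^2-1}=1$.
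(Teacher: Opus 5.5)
Your overall strategy works, but one step is not justified as written: the deduction of $c=0$ from ``$G_H^+$ is not pluriharmonic across $J_H^+$''.

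\textbf{The gap.} You only know that $h=u-cG_H^+$ equals $-cG_H^+$ on $\mathbb{C}^2\setminus W$, while $h=u>0$ on $W$. In fact $G_H^+=\max(-h/c,0)$ on all of $\mathbb{C}^2$. So near a point of $\partial W$ nothing contradictory can be extracted locally without information on the geometry of $J_H^+$; this is exactly the Levi-flat/non-smoothness issue the paper has to face. Moreover, there need not be any point of $J_H^+$ with a neighbourhood disjoint from $W$: for the basin $W$ of an attracting fixed point one has $\partial W=J_H^+$.

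\textbf{The fix.} The repair is global and uses information you already have, namely $u=0$ on $V_R^+$. The function $y\mapsto h(0,y)$ is harmonic on $\mathbb{C}$. For $|y|>R$ the point $(0,y)$ lies in $V_R^+\subset U_H^+$, so by \eqref{L1}
$$h(0,y)=-cG_H^+(0,y)\le -c\log|y|+cC.$$
If $c>0$, this tends to $-\infty$. Then $h(0,\cdot)$ attains a maximum on $\mathbb{C}$, so it is constant by the maximum principle, which is absurd. Hence $c=0$, and your identity-principle argument finishes. The exploratory remarks (fundamental groups of Fatou components, replacing $f$ by $f^2$, unboundedness of $f^{-1}(K_H^+)$) are not used and should be dropped.

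\textbf{Comparison with the paper.} With this repair, your proof takes a genuinely different route. The paper applies the Forn\ae ss--Sibony uniqueness theorem to $f^*\mu_H^+$, which is supported on $f^{-1}(J_H^+)=J_H^+$. It obtains that $\mathcal{C}G_H^+-G_H^+\circ f$ is pluriharmonic and vanishes on $J_H^+$, and must then show this function vanishes identically. Since nothing controls $G_H^+\circ f$ near infinity, no Liouville argument is available there. Instead the paper places $J_H^+$ inside a Levi-flat set $\{\mathrm{Re}\,\mathcal{G}=0\}$ and invokes density of the stable manifold of a saddle point and the non-smoothness of $J_H^+$ (Bedford--Smillie, Bedford--Kim).

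Your topological step, that $f$ permutes the components of $\mathbb{C}^2\setminus J_H^+$ and the connected set $U_H^+$ is one of them, reduces everything to excluding $f(U_H^+)\subset \mathrm{int}\,K_H^+$. It also produces a psh function $u$ known to vanish on $V_R^+$, and that vanishing is exactly what makes the elementary maximum-principle argument suffice. So you use the same uniqueness theorem but need no saddle points and no regularity results for $J_H^+$. The paper's route yields the stronger intermediate identity $G_H^+\circ f=\mathcal{C}G_H^+$ directly. Yours yields only $f(U_H^+)=U_H^+$, which is precisely what Theorem~\ref{Thm1} requires.
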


\begin{cor}\label{Cor2.2}
    Let $H$ and $F$ be a pair of H\'{e}non maps. Let $f\in \mathrm{Aut}(\mathbb{C}^2)$ such that $f(J_H^+)=J_F^+$. Then, $f$ is a polynomial automorphism such that 
    \[
F^{s_1}\circ f=f\circ L \circ H^{s_2}, 
\] 
for some linear map $L(x,y)=(a x,a^d y)$ with $a^{d^2-1}=1$ and $s_1,s_2\in \mathbb{Z}$. 
\end{cor}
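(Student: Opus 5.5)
We prove Corollary~\ref{Cor2.2} by reducing it to Corollary~\ref{Cor1}, i.e.\ by showing that an automorphism carrying $J_H^+$ onto $J_F^+$ must also carry $K_H^+$ onto $K_F^+$ (or onto $K_{F^{-1}}^{+}$-type sets, which we rule out). Since $f$ is a homeomorphism of $\mathbb{C}^2$, it maps the components of $\mathbb{C}^2\setminus J_H^+$ onto the components of $\mathbb{C}^2\setminus J_F^+$. We have
\[
\mathbb{C}^2\setminus J_H^+ = U_H^+ \sqcup \operatorname{int} K_H^+ ,
\]
and $U_H^+$ is connected. The key point is to show that $f(U_H^+)=U_F^+$; then $f(K_H^+)=K_F^+$ follows by taking complements, and Corollary~\ref{Cor1} gives the conclusion directly.

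To identify $f(U_H^+)$, we argue as follows. The set $U_F^+$ is the unique component of $\mathbb{C}^2\setminus J_F^+$ that is unbounded in a strong sense: it contains $V_R^+$, and its complement $K_F^+$ meets every vertical line $\{y=c\}$ in a compact set. By contrast, each component of $\operatorname{int}K_F^+$ lies in $K_F^+\subset V_R^-\cup V_R$ (with $V_R=\{|x|,|y|\le R\}$ and $V_R^-=\{|x|>\max(R,|y|)\}$ in the standard filtration), on which $F$ has a bounded forward orbit. It is cleaner, though, to use intrinsic invariants. The fundamental group of $U_H^+$ is $\mathbb{Z}[1/d]$, which is not finitely generated, while interior components of $K^+$ are Fatou components. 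By the Bedford--Smillie classification, such a component is either a basin of an attracting cycle (biholomorphic to $\mathbb{C}^2$, hence simply connected) or, when $|\delta|=1$, a rotation domain. So the plan is to show $f(U_H^+)$ has non-finitely-generated $\pi_1$ and therefore cannot be an interior component of $K_F^+$.

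The main obstacle is this topological step for general Fatou components, since rotation domains or wandering domains are not fully classified. As a fallback that avoids any classification, we use pluripotential theory. The function $G_F^+\circ f$ is continuous and plurisubharmonic, pluriharmonic and positive on $f^{-1}(U_F^+)$, and zero on $f^{-1}(K_F^+)$. Suppose $f(U_H^+)$ were a component $\Omega$ of $\operatorname{int} K_F^+$. Then $G_F^+\circ f$ would vanish on $U_H^+$, and its zero set $f^{-1}(K_F^+)$ would contain $U_H^+$. But $f^{-1}(K_F^+)$ has empty interior intersecting the closure of $f^{-1}(U_F^+)$ only along $J_H^+$, and $J_H^+\cup U_H^+$ is closed with dense complement structure. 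This forces $f^{-1}(U_F^+)\subset \operatorname{int}K_H^+$. The set $f^{-1}(U_F^+)$ carries the nonconstant pluriharmonic function $G_F^+\circ f$ tending to $0$ at its boundary $J_H^+$, and it is bounded in some direction. We then contradict this using that $G_H^+$-type functions on $\operatorname{int}K_H^+$ are bounded, together with the maximum principle on the level sets $\{y=c\}$, where $K_H^+\cap\{y=c\}$ is compact. Concretely, fix a line $\{y=c\}$ meeting $f^{-1}(U_F^+)$. The restriction of $G_F^+\circ f$ to the component of that slice is subharmonic on a bounded open subset of $\mathbb{C}$, positive inside, and zero on the boundary, which contradicts the maximum principle.

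Once $f(U_H^+)=U_F^+$ is established, Corollary~\ref{Cor1} yields $F^{s_1}\circ f=f\circ L\circ H^{s_2}$ with $L(x,y)=(ax,a^dy)$, $a^{d^2-1}=1$, and $f$ polynomial. This is the claimed statement.
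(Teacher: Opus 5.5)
Your argument is correct once reduced to its core, but it takes a genuinely different route from the paper.

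The paper gives no separate proof of this corollary; it is meant to follow the template of Corollary~\ref{Cor2.1}:
\begin{itemize}
\item $f^*\mu_F^+$ is a closed positive $(1,1)$ current supported on $f^{-1}(J_F^+)=J_H^+\subset K_H^+$, so uniqueness of such currents gives $f^*\mu_F^+=\mathcal{C}\mu_H^+$.
\item Hence $\mathcal{C}G_H^+-G_F^+\circ f$ is pluriharmonic on $\mathbb{C}^2$.
\item Density of stable manifolds of saddle points in $J_H^+$, together with non-smoothness of $J_H^+$, forces this function to vanish. This gives $G_F^+\circ f=\mathcal{C}G_H^+$, hence $f(K_H^+)=K_F^+$, and Corollary~\ref{Cor1} finishes.
\end{itemize}

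You obtain $f(U_H^+)=U_F^+$ by topology and the maximum principle instead:
\begin{itemize}
\item $f$ permutes the components of $\mathbb{C}^2\setminus J$, and $U_H^+$ is one of them.
\item If $f(U_H^+)\neq U_F^+$, then $W=f^{-1}(U_F^+)$ is a component lying in $\operatorname{int}K_H^+$. On $W$ the function $u=G_F^+\circ f$ is positive, and $u=0$ on $\partial W=f^{-1}(J_F^+)$.
\item Since $H(x,c)=(c,p(c)-\delta x)\in V_R^+$ for $|x|$ large, each slice $W_c=\{x:(x,c)\in W\}$ is a bounded open subset of $\mathbb{C}$. Every boundary point of $W_c$ gives a point of $\partial W$.
\item The maximum principle for the subharmonic function $u(\cdot,c)$ then yields $u\le 0$ on $W_c$, a contradiction.
\end{itemize}

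Your route uses only the filtration and continuity and plurisubharmonicity of the Green's function. It avoids the uniqueness theorem for currents, the stable manifold theorem with density of stable manifolds, and the non-smoothness result. The paper's route directly gives the stronger identity $G_F^+\circ f=\mathcal{C}G_H^+$, though the statement does not need it.

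When you write it up:
\begin{itemize}
\item Drop the first strategy via $\pi_1$ and a classification of Fatou components. No complete classification of Fatou components of H\'enon maps is available, and you do not need one.
\item Delete the garbled sentences, such as ``$J_H^+\cup U_H^+$ is closed with dense complement structure'' and ``$G_H^+$-type functions on $\operatorname{int}K_H^+$ are bounded''. They play no role.
\item State explicitly the two facts the slicing step uses: compactness of $K_H^+\cap\{y=c\}$, and the inclusion $\partial W_c\times\{c\}\subset\partial W$.
\end{itemize}
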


\noindent
For $c>0$, let 
$$
\Omega^c=\{(x,y)\in \mathbb{C}^2: G_H^+(x,y)>c\}.
$$
As in \cite{HOV}, we show that the fundamental group of $\Omega^c$ is $\mathbb{Z}[1/d]$. Further,  we construct covering of $\Omega^c$ corresponding to $\mathbb{Z}$, which turns out to be biholomorphic to $\mathbb{C}\times \mathcal{A}^c$, where $\mathcal{A}^c=\left\{t\in \mathbb{C}: \lvert t\rvert>\exp(c)\right\}$ (see Section 3). Using this analytic structure obtained for $\Omega^c$, we
obtain the following theorem.   

\begin{thm}\label{Thm4}
Let $H$ be a H\'{e}non map defined in \ref{Henon def}. For $c_1$, $c_2>0$, let $f=(f_1,f_2):\Omega^{c_1}\to \Omega^{c_2}$ be a biholomorphism which induces identity on $\mathbb{Z}[1/d]$, then $c_1=c_2$. Further, 
\begin{itemize}
\item[1.] 
for $0<\epsilon<1$, and a large $\rho_1>1$, 
\[
f(x,y)=\left(a_1 x+b_1 y+c_1+\sum_{n=1}^{\infty}\frac{\alpha_n(x)}{y^n},a_2 x+b_2 y+c_2+\sum_{n=1}^{\infty}\frac{\beta_n(x)}{y^n}\right),
\]
on $\mathcal{B}_1=\{(x,y)\in \Omega^{c_1}: |x|<\epsilon, |y|>2\rho_1\}$, where $a_1,a_2,b_1,b_2,c_1,c_2 \in \mathbb{C}$ and for $n\geq 1$, $\alpha_n$ and $\beta_n$ are polynomials in one variable $x$ of degree at most $n+1$; 
\item[2.]
for every $c_3>c_1$, $f:\Omega^{c_3}\to \Omega^{c_3}$ is an automorphism;
\item[3.] if $f\in \mathrm{Aut}(\mathbb{C}^2)$, then $f$ is a linear map of the form 
    \[
    f(x,y)=(a x,a^d y),
    \]
    where $a^{d^2-1}=1$, and $f(K_H^\pm)=K_H^\pm$.
\end{itemize}
\end{thm}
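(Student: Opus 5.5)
We will work through the covering $\mathbb{C}\times\mathcal{A}^{c}$ of $\Omega^{c}$ described before the theorem. Concretely, it is the restriction of the Hubbard--Oberste-Vorth covering $\hat U_H^+\cong\{(z,\zeta):|\zeta|>1\}$ to $\{|\zeta|>e^{c}\}$, with $G_H^+=\log|\hat\phi_H^+|$.

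\textbf{Step 1: lifting and $c_1=c_2$.} Since $f$ induces the identity on $\pi_1=\mathbb{Z}[1/d]$, it preserves the subgroup $\mathbb{Z}$. Hence $f$ lifts to a biholomorphism
\[
\hat f:\mathbb{C}\times\mathcal{A}^{c_1}\to\mathbb{C}\times\mathcal{A}^{c_2}
\]
that commutes with the deck group $\mathbb{Z}[1/d]/\mathbb{Z}$. For fixed $\zeta$, the map $z\mapsto \pi_2\hat f(z,\zeta)$ is a bounded entire function, hence constant. So the second coordinate of $\hat f$ depends only on $\zeta$, and $\zeta\mapsto g(\zeta)$ is a biholomorphism $\mathcal{A}^{c_1}\to\mathcal{A}^{c_2}$. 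Since $\mathcal{A}^{c}$ is a punctured disc at $\infty$ (by Riemann removable singularities at $\infty$), $g$ is a rotation $\zeta\mapsto\lambda\zeta$ with $|\lambda|=1$. A biholomorphism between these exterior discs must also match the inner radii, so $e^{c_1}=e^{c_2}$ and $c_1=c_2$. The first coordinate is then $z\mapsto a(\zeta)z+b(\zeta)$ with $a,b$ holomorphic and $a$ nonvanishing. Compatibility with the deck transformations should force $\lambda$ to be a root of unity of the appropriate type.

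\textbf{Step 2: item 2 and item 1.} Because $g$ is a rotation, $|\hat\phi_H^+\circ f|=|\hat\phi_H^+|$, i.e.\ $G_H^+\circ f=G_H^+$. Thus $f$ maps each $\Omega^{c_3}$, $c_3>c_1$, onto itself, which is item 2. For $c_3$ large, $\Omega^{c_3}$ contains a neighbourhood of $[0:1:0]$ of the form used in Theorem \ref{Thm2}, and $f$ maps the B\"ottcher chart $V_R^+$ near $[0:1:0]$ to itself in the straightened coordinates. We then reproduce the proof of Theorem \ref{Thm2} verbatim on $\mathcal{B}_1$, since that proof only uses the lift on a neighbourhood of $[0:1:0]$: expand $f$ in Laurent series in $y$ on $\{|x|<\epsilon,|y|>2\rho_1\}$, use the growth bounds $|f|\lesssim |y|$ coming from $G_H^+\circ f=G_H^+$ and $G_H^+\sim\log|y|$, and obtain the degree bounds on $\alpha_n,\beta_n$ from Cauchy estimates in $x$ combined with that growth.

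\textbf{Step 3: item 3.} If $f\in\mathrm{Aut}(\mathbb{C}^2)$, apply the argument of Theorem \ref{Thm1}: $f$ is polynomial and $f^{s_1}=L\circ H^{s_2}$. Since $G_H^+\circ f=G_H^+$ while $G_H^+\circ H^{s_2}=d^{s_2}G_H^+$, we get $s_2=0$. Then $f^{s_1}=L$ is linear of finite order, and the normal form from Step 2 together with polynomiality and degree considerations should show that $f$ is itself of the form $(ax,a^dy)$. One route is to show $f$ has degree one by comparing the growth of $G_H^+\circ f=G_H^+$ along $U_H^-$. Since $G_H^+$ is invariant and $L$ commutes with $H$, we get $f(K_H^+)=K_H^+$. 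For $K_H^-$, we use that $L$ conjugates $H^{-1}$ to itself.

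\textbf{Main obstacle.} The hardest point is Step 3: passing from $f^{s_1}=L$ to $f$ itself being linear of that form.
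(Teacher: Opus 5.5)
\textbf{Gap in Step 1.} Your Step 1 does not prove $c_1=c_2$, and that equality is the heart of the theorem. You note that $\mathcal{A}^{c}$ is a punctured disc about $\infty$. But that holds for every $c$, and $t\mapsto e^{c_2-c_1}t$ is a biholomorphism $\mathcal{A}^{c_1}\to\mathcal{A}^{c_2}$. So the conformal argument only gives $g(\zeta)=\lambda\zeta$ with $|\lambda|=e^{c_2-c_1}$; this is exactly what the paper obtains, $\alpha=e^{c_2-c_1}\alpha'$. Your claims that $g$ is a rotation and that a biholomorphism must ``match the inner radii'' are false. In fact $\mathbb{C}\times\mathcal{A}^{c_1}\cong\mathbb{C}\times\mathcal{A}^{c_2}$ for all $c_1,c_2$, so no argument using only the geometry of the cover can give $c_1=c_2$. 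The hypothesis $f_*=\mathrm{id}$ has to enter through the deck group, whose explicit $Q$-dependent formula carries the dynamics. That is precisely the step you leave as ``should force''.

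The paper does it as follows. The map $\hat f\circ\gamma_{k/d^n}\circ\hat f^{-1}$ is a deck transformation with second coordinate $t\mapsto e^{2\pi i k/d^n}t$, hence it equals $\gamma_{k/d^n}$. Comparing first coordinates gives \eqref{eq07.1}, so $\beta$ is constant. The top-degree coefficient of the left side is proportional to $(1-e^{2\pi i k/d})(\beta-\lambda^{d^{n-1}(d+1)})$, while the right side $\gamma(t)-\gamma(e^{2\pi i/d^n}t)$ stays bounded as $n\to\infty$ for fixed $t$. This forces $\lambda^{d^n(d+1)}\to\beta\neq 0$, hence $\beta^{d-1}=1$ and $|\lambda|=1$; only then does $c_1=c_2$ follow. (Minor slip: $z\mapsto\pi_2\hat f(z,\zeta)$ is not bounded, but it omits a disc, so it is constant.)

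\textbf{Downstream consequences.} Everything after Step 1 depends on this computation. Your Step 2 derives $G_H^+\circ f=G_H^+$ from $|\lambda|=1$. Transplanting the proof of Theorem \ref{Thm2} to $\mathcal{B}_1$ also needs more than ``a lift near $[0:1:0]$''. It needs the normalized lift $(s,t)\mapsto(\beta s+\gamma,\alpha t)$ with $\beta,\gamma$ constant and $\alpha^{d+1}=\beta$, obtained from the same identity after composing with a deck transformation as in Theorem \ref{lift of UH+}. That form is what shows $f(\mathcal{B}_1)\subset U_R^+$. This inclusion is required before the lower bound $G_H^+\ge\log|y|-M_1$ can be applied at $f(z)$ to get $|f(z)|\lesssim|y|$. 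Without it, your assertion that $f$ maps the B\"ottcher chart to itself has no support.

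\textbf{Step 3.} Step 3 is left open. Its argument for $K_H^-$ rests on $L$ commuting with $H$, which is false in general: for $p(y)=y^2$ and $a=e^{4\pi i/3}$ one has $L\circ H\neq H\circ L$. The paper simply notes $G_H^+\circ f=G_H^+$ on $\mathbb{C}^2$, so $f(U_H^+)=U_H^+$, and appeals to Theorem \ref{Thm1}.

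For the passage you flag, item 1 gives a direct route. For entire $f$, the coefficients $\beta'_n(y)=\frac{1}{n!}\partial_x^n f_1(0,y)$ are entire and $O(|y|^{1-n})$ for large $|y|$, so $f$ is affine. This is the same reasoning by which the paper concludes that $g$ is affine in the proof of Theorem \ref{Thm1}. Then $\Phi_H^+\circ f=\tilde f\circ\Phi_H^+$ on $\mathcal{B}_1$, together with $\phi_H^+\sim y$ and $\psi_H^+\sim xy+\kappa$, forces $f(x,y)=(\alpha^d x,\alpha y)$. This is $(ax,a^dy)$ with $a=\alpha^d$, and $a^{d^2-1}=1$ because $\alpha^{d^2-1}=\beta^{d-1}=1$.
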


\noindent
This article is organized as follows. In Section~2, we introduce the basic terminology and recall several known results that will be used throughout the paper. In Section~3, we construct an intermediate covering of $\Omega^c$. In Section~4, we give an explicit description of the lifts of automorphisms of $\Omega^c$ and $U_H^+$ to their respective covering spaces. In Section~5, we give proofs of Theorem \ref{Thm2} and Theorem \ref{Thm1} using the explicit forms of these lifts. In Section~6, we establish the corollaries stated above.  In Section~7, we give proof of Theorem \ref{Thm4}. 

\medskip 
\noindent
{\textbf{Acknowledgement:}}  
The first author is supported by University Grants Commission (UGC). The second author is partially supported by the Mathematical Research Impact Centric Support\\
 (MTR/2023/001258) and the Advanced Research Grant (ARG/2025/007852) of the Anusandhan National Research Foundation (ANRF), India. A substantial part of this work was carried out during the first author's visit to the Indian Statistical Institute (ISI), Bangalore Centre. The first author sincerely thanks the Indian Statistical Institute, Bangalore Centre, for its warm hospitality and excellent research environment during her visit.

\section{Preliminaries}
\noindent 
In this section, we present rudimentary terminologies  and a few known fundamental results pertinent to studying dynamics of H\'{e}non maps. 
 
 \medskip 
 \noindent
We consider a H\'{e}non map $H$ as defined in \eqref{Henon def}. Conjugating $H$ with a suitable affine map one can consider the polynomial $p$ to be a monic and centered of degree $d \geq 2$, i.e.,  $p(y)=y^d+c_{d-2}y^{d-2}+\cdots+c_0$. For $R>0$, we define 
\begin{align*}
    V_R^+ & =\{(x,y)\in \mathbb{C}^2 : \max\{|x|,R\}<|y|\},\\
    V_R^- & =\{(x,y) \in \mathbb{C}^2 : \max\{|y|,R\}<|x|\},\\
    V_R & = \{(x,y)\in \mathbb{C}^2 : \max\{|x|, |y|\}\leq R\}.
\end{align*}
One can choose a large $R>0$ such that 
\[
V_R^+\subset H^{-1}(V_R^+)\subset H^{-2}(V_R^+)\subset \cdots,
\]
\[
U_H^+ =\bigcup_{n\geq 0} H^{- n}\left(V_R^+\right) \text{ and } K_H^+\subset V_R\cup V_R^-.
\]
The above trapping properties of the sets $\{V_R^\pm, V_R\}$ helps localizing the dynamics of H\'{e}non maps. As in dimension one, the method of pluri-potential theory plays a significant role in understanding the iteration theory of H\'{e}non maps. It turns out that the pluri-complex  Green's function of the non-escaping set $K_H^+$ is the Green's function $G_H^+$ of the H\'enon map $H$ as defined in \eqref{green def}. 
The Green's function $G_H^+$ is  plurisubharmonic on $\mathbb C^2$, non-negative everywhere, and pluriharmonic on $\mathbb C^2 \setminus K^{+}_H$. Further, $G_H^+$ vanishes precisely on $K^{+}_H$. By construction, 
\[
G^{+}_H \circ H = d G^{+}_H.
\]
Moreover, $G_H^+$ have logarithmic growth near infinity, i.e., there exist $R>0$ and $C>0$ such that   
\begin{equation}\label{L1}
\log^+ \lvert y \rvert-C\leq G_H^+ (x,y) \leq \log^+ \lvert y \rvert+C,
\end{equation}
for all $(x,y)\in \overline{V_R^+ \cup V_R}$.

\medskip 
\no 
Each H\'{e}non map $H$ extends meromorphically to $\mathbb{P}^2$ with an attracting fixed point at $I^- = [0:1:0]$ and indeterminacy point at $I^+ = [1:0:0]$. Since $I^-$ is an attracting fixed point of $H$, in a similar spirit as for polynomials in the complex plane, one can construct B\"ottcher coordinates for H\'{e}non maps as 
\[
\phi_H^+(x,y)=\lim_{n\to \infty} y. \left(\frac{y_1}{y^d}\right)^{\frac{1}{d}}. \cdots . \left(\frac{y_{n+1}}{y_n^d}\right)^{\frac{1}{d^{n+1}}}.\cdots, 
\]
for $(x,y)\in V_R^+$. Here $y_n$'s are the projections of $H^n(x,y)$ to the $y$-axis. It turns out that $\phi_H^+:V_R^+\to \mathbb{C}\backslash\mathbb{\overline{D}}$ is a holomorphic function such that 
\begin{equation}\label{Bottcher}
\phi_H^+\circ H=(\phi_H^+)^d,
\end{equation}
and 
\[
\phi_H^+(x,y)\sim y \text{ as $||(x,y)||\to \infty$ in } V_R^+.
\]
The readers may see \cite{BS1}, \cite{HOV}, \cite{MNTU} for detailed construction and properties of Green's functions and B\"{o}ttcher coordinates. 

\medskip
\noindent
Next we discuss analytic structure of the escaping set $U_H^+$ due to Hubbard and Oberste-Vorth (\cite{HOV}, \cite{MNTU}). In Section 3, we give a similar construction for super-level sets of Green's function. In Section 4, we employ the analytic structure of $U_H^+$ directly for obtaining description of lifts of automorphisms of $U_H^+$. 

\medskip 
\noindent
For a H\'{e}non map of degree $d$, the fundamental group of $U_H^+$ turns out to be $\mathbb{Z}[1/d]=\{m/d^n:m,n\in \mathbb{Z}\}$. This follows from the fact that any loop in $V_R^+$ is homotopic to a loop of the form $C_m:t \mapsto \left(0,R e^{2\pi i m t}\right)$, for some $m\in \mathbb{Z}$, and any loop in $U_H^+$ can be pushed to $V_R^+$ applying sufficiently large iteration of $H$.  Formally this fact can be established by looking at the correspondence: 
\[
\alpha_H^+(C)=\frac{1}{2\pi i}\int_C \omega,
\]
where $C$ is a loop in $U_H^+$ and $\omega$ is the one form on $U_H^+$ obtained by extending the one form $\mathrm{d}\phi_H^+ / \phi_H^+$ using the identity $\phi_H^+\circ H=(\phi_H^+)^d$ on $V_R^+$. 

\medskip 
\noindent
The map $\phi_H^+$ is an analytic function on $V_R^+$ and it cannot be extended holomorphically to $U_H^+$. %Nevertheless, $\phi_H^+$ extends along any curve in $U_H^+$ starting at a point in $V_R^+$ and
But $\phi_H^+$ can be extended as an infinite valued analytic map $\hat{\phi}_H^+$ on its Riemann domain $\hat{U}_H^+$. A formal construction of $\hat{U}_H^+$ can be obtained as follows.  Fix $a\in V_R^+$, and define 
\[
\hat{U}_H^+ = \left\{(z,C): z\in U_H^+,  C \text{ is a path from } a \text{ to } z \text{ in } U_H^+ \right\}/\sim_H,
\]
where $\sim_H$ is equivalence relation defined on the set $\{(z,C): z\in U_H^+, C \text{ is a path from } a \text{ to } z \text{ in } U_H^+\}$ by
\[
(z,C)\sim_H(z',C') \iff z=z' \text{ and } \alpha_H^+(C\bar{C}')\in \mathbb{Z},
\]
where $\bar{C}'$ is obtained by taking the curve $C'$ with the reverse orientation. 
Note that $\hat{U}_H^+$ is nothing but the covering of $U_H^+$ corresponding to the subgroup $\mathbb{Z} \subseteq \mathbb{Z}[1/d]$. Let $\hat{V}_R^+=\{[z,C]\in\tilde{U}_H^+ : C\subset V_R^+\}$ and let $\pi_H$ be the covering map, i.e.,  $\pi_H: [z,C]\mapsto z$.  Let $\hat{\phi}_H^+:\hat{U}_H^+ \to \mathbb{C}\backslash\mathbb{\overline{D}}$ be defined by
\[
 [z,C]\mapsto \phi_H^+(a)\left(e^{\int_C\omega}\right).
 \]
A simple calculation yields $\pi_H|_{\hat{V}_R^+}:\hat{V}_R^+\to V_R^+$ is a biholomorphism, and $\hat{\phi}_H^+|_{\hat{V}_R^+}\equiv \phi_H^+|_{V_R^+}$.

\medskip 
\noindent
For large constant $M>0$, let 
\[
U_R^+=\left\{(x,y)\in V_R^+ : \left\lvert \phi_H^+(x,y)\right\rvert>M\max\{R,|x|\}\right\}.
\]
Note that $H(U_R^+)\subseteq U_R^+$. The following lemma is crucial in obtaining an explicit description of $\hat{U}_H^+$ (see \cite[Lemma 7.3.7]{MNTU}).  We only give a brief sketch of the proof for the first part of the lemma. However a detailed proof is provided for the later part since it differs from an analogous statement presented in \cite[Lemma 7.3.7]{MNTU}.
\begin{lem}\label{prop2.2}
There exist a holomorphic function $\psi_H^+:U_R^+\to\mathbb{C}$ such that
\begin{equation}\label{psi}
\psi_H^+\circ H(x,y)=\frac{\delta}{d}\psi_H^+(x,y)+Q\left (\phi_H^+(x,y)\right)
\end{equation}
where $Q(t)=t^{d+1}+b_{d-1}t^{d-1}+\cdots+b_0$ is a monic polynomial in single variable of degree $d+1$, and 
\begin{equation}\label{Phi}
\Phi_H^+=(\psi_H^+,\phi_H^+):U_R^+\to \mathbb{C}\times \mathbb{C}\backslash\mathbb{\overline{D}}
\end{equation}
is an injective holomorphic map. Further,  there exist a sufficiently large constants $\rho_1$  and a sufficiently small constant $\rho_2$ such that the map $\Phi_H^+=\left(\psi_H^+,\phi_H^+\right):U_R^+\to \mathbb{C}^2$ contains the domain $\{(s,t)\in \mathbb{C}^2 : |s|<\rho_2|t|^2, |t|>\rho_1\}$ in its range set.
\end{lem}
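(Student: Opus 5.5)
The plan is to build $\psi_H^+$ as an explicit leading term plus a correction obtained from the contraction of $H$ near $I^-=[0:1:0]$. Injectivity and the range statement will then follow by studying $\psi_H^+$ on each level set of $\phi_H^+$ with Rouch\'e's theorem.

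\textbf{Step 1 (construction of $\psi_H^+$).} On $U_R^+$ write $H(x,y)=(x_1,y_1)=(y,p(y)-\delta x)$. Then $\delta x=p(y)-y_1$, and by (\ref{Bottcher}) $y_1\sim\phi_H^+(x,y)^d$, while $y=\phi_H^+(x,y)(1+o(1))$. I would first choose an explicit holomorphic approximant $\psi_0$, a polynomial in $x$, $y$ and $\phi_H^+$ whose leading part is a nonzero multiple of $x\,\phi_H^+$. Its lower-order coefficients, and the coefficients $b_j$ of a monic $Q$ of degree $d+1$, are fixed by matching terms so that the error
\[
e:=\psi_0\circ H-\tfrac{\delta}{d}\psi_0-Q(\phi_H^+)
\]
satisfies $|e(x,y)|\le C|\phi_H^+(x,y)|^{-1}$ on $U_R^+$. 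This bookkeeping is routine. The monic $t^{d+1}$ term arises from the product of $x_1\phi_H^+\circ H=y\,(\phi_H^+)^d$ with $y\approx\phi_H^+$.

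I then look for $\psi_H^+=\psi_0+u$, where $u$ must solve $u\circ H-\frac{\delta}{d}u=-e$. Solving forward gives
\[
u=\frac{d}{\delta}\sum_{k\ge0}\Big(\frac{d}{\delta}\Big)^{k}e\circ H^k .
\]
Since $H(U_R^+)\subseteq U_R^+$ and $|\phi_H^+\circ H^k|=|\phi_H^+|^{d^k}$, the bound on $e$ gives $|e\circ H^k|\le C|\phi_H^+|^{-d^k}$. This decay is super-exponential, so it beats the geometric factor $(d/\delta)^k$, and the series converges locally uniformly. Hence $u$ is holomorphic, $u=O(|\phi_H^+|^{-1})$, and (\ref{psi}) holds by telescoping.

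\textbf{Step 2 (injectivity).} Fix $t$ with $|t|$ large and consider the level set $L_t=\{\phi_H^+=t\}\cap U_R^+$. Because $\phi_H^+=y(1+o(1))$ uniformly in $V_R^+$, the implicit function theorem shows that $L_t$ is a graph $y=y_t(x)$ over the disk $\{|x|<|t|/M'\}$. On this disk $\psi_H^+|_{L_t}=\kappa\,x\,t+g_t(x)$ with $\kappa\neq0$, and $g_t$ is dominated by $|x\,t|$ plus lower-order terms. Rouch\'e's theorem then gives that $\psi_H^+|_{L_t}$ is injective on $L_t$. Since distinct fibres of $\phi_H^+$ are disjoint, $\Phi_H^+$ is injective on $U_R^+$.

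\textbf{Step 3 (range).} The same Rouch\'e comparison shows that the image of $L_t$ under $\psi_H^+$ contains the disk $\{|s|<c|t|\cdot |t|/M'\}=\{|s|<\rho_2|t|^2\}$ once $|t|>\rho_1$, where $\rho_2$ is small relative to $|\kappa|/M'$.

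The main obstacle is the uniformity in Steps 2 and 3. One must check that $g_t$, which involves $y_t(x)-t$ and the correction $u$, is controlled by $|\kappa x t|$ on the circle $|x|=2\rho_2|t|/|\kappa|$ uniformly for all $|t|>\rho_1$, while this circle stays inside the domain $|x|<|t|/M$ allowed by $U_R^+$. I would handle this by shrinking $\rho_2$ and enlarging $\rho_1$, using the estimate $\phi_H^+(x,y)=y+O(1)$ when $|x|\lesssim|y|$.
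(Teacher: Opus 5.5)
Your outline (approximate solution, a forward-summed correction, then Rouch\'e on the fibres $\{\phi_H^+=t\}$) is reasonable, and Steps 2--3 are essentially the paper's argument. \textbf{The gap is Step 1.} It carries the entire difficulty, and you dismiss it as routine.

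You need a polynomial $\psi_0(x,y,\phi_H^+)$ with $|e|\le C|\phi_H^+|^{-1}$ \emph{uniformly on all of} $U_R^+$, because Steps 2--3 use the resulting bound $u=O(|\phi_H^+|^{-1})$ there. But $U_R^+$ contains points with $|x|$ comparable to $|\phi_H^+|/M$. There $x/\phi_H^+$ is only $O(1/M)$, not small in $|\phi_H^+|$. So the matching is a two-parameter expansion, and it requires a uniform expansion of $\phi_H^+$ (or of its inverse $\lambda_H^+$) to the corresponding order.

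Already for $d=2$, $p(y)=y^2+c_0$, one finds on $\{\phi_H^+=t\}$ that $y=t+\frac{\delta x}{2t}-\frac{c_0}{2t}-\frac{\delta^2x^2}{8t^3}+O(|t|^{-2})$. Hence $\psi_0=x\phi_H^+$ leaves
\[
e=-\bigl(b_1+\tfrac{c_0}{2}\bigr)t-\frac{\delta^2x^2}{8t}+O(1),
\]
which is as large as $|\delta|^2|t|/(8M^2)$ on $U_R^+$. You must add a term like $-\frac12x(y-\phi_H^+)$ and then keep going down to order $|t|^{-1}$. At each stage you have to invert $g\mapsto g\circ H-\frac{\delta}{d}g$ modulo lower order. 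This operator does not lower the order of every monomial ($x\circ H=y$, $(\phi_H^+)^m\circ H=(\phi_H^+)^{dm}$), so solvability for general $d$ needs an actual argument, and none is given. Knowing that such a $\psi_0$ exists amounts to knowing the asymptotics of the very $\psi_H^+$ you are constructing.

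\textbf{The missing idea} is the one the paper uses to avoid any two-variable matching. Put $t=\phi_H^+$ and let $\lambda_H^+$ be the inverse of $y\mapsto\phi_H^+(x,y)$. Then $s_1=t\int_0^x\partial_t\lambda_H^+(\xi,t)\,d\xi$ satisfies $s_1\circ H-\frac{\delta}{d}s_1=Q_1(t)$ exactly, with $Q_1$ depending on $t$ alone. To see this, differentiate $\lambda_H^+(\lambda_H^+(x,t),t^d)=p(\lambda_H^+(x,t))-\delta x$ in $x$ and in $t$; eliminating gives $\partial_x\bigl(s_1\circ H-\frac{\delta}{d}s_1\bigr)=0$. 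This is $\det DH=\delta$ in disguise.

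The defect is then a one-variable function with Laurent tail $O(1/t)$. The correction is the explicit series $r(t)=\sum_{l\ge0}(d/\delta)^{l+1}(Q_1-Q)(t^{d^l})$, which is uniformly $O(1/|t|)$ for trivial reasons. In fact, if you take $\psi_0:=s_1$, your forward series $u$ is exactly $r(t)$. Steps 2--3 then go through, since on each fibre $\partial_x(s_1/t)=\partial_t\lambda_H^+\approx1$ on the disc $|x|<|t|/M$.

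Three smaller points:
\begin{itemize}
\item Monicity of $Q$ forces the coefficient of $x\phi_H^+$ in $\psi_0$ to be exactly $1$, not just nonzero.
\item The statement requires $b_d=0$, and you do not arrange this. Replacing $\psi$ by $\psi+a\phi_H^+$ changes $Q$ by $at^d-\frac{\delta}{d}at$, so a suitable $a$ kills $b_d$.
\item Rouch\'e on circles $|x|=r$ only gives injectivity over values of modulus less than about $|\kappa t|r$. Injectivity on the whole fibre is cleaner via a derivative bound $|g_t'|<|\kappa t|$ on the convex disc, as with $\partial_t\lambda_H^+\approx1$ in the paper.
\end{itemize}
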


\begin{proof}
Since $\phi_H^+(x,y)\sim y \text{ as $\lVert (x,y)\rVert \to \infty$ in } V_R^+$, the map 
\[
(x,y)\mapsto (x,t)=\left (x,\phi_H^+(x,y)\right)
\]
is a biholomorphism from $U_R^+$ onto its image. Let
\[
(x,t)\mapsto \left(x,\lambda_H^+(x,t)\right),
\] 
be its inverse. For $(x,y)\in U_R^+$, let
\[
s_1=\phi_H^+(x,y) \int_0^x \frac{\partial\lambda_H^+}{\partial t}(\xi,t)\mathrm{d}\xi ,
\]
where $t=\phi_H^+(x,y)$. Then the H\'{e}non maps $H$ in $(s_1,t)$ coordinate is given by
\[
(s_1,t)\mapsto \left(\frac{\delta}{d}s_1+Q_1(t),t^d\right),
\]
where $Q_1(t)$ is holomorphic for $|t|>RM$. Further an explicit expression of $Q_1(t)$ can be given as: 
\[
Q_1(t)=t^d\int_0^{\lambda_H^+(0,t)}\frac{\partial\lambda_H^+}{\partial t}(\xi,t^d)\mathrm{d}\xi.
\]
Since $\lambda_H^+(0,t)-t$ is bounded, $\left\lvert Q_1(t)\right \rvert\leq {\lvert t \rvert}^{d+1}$. Thus 
\[
Q_1(t)=t^{d+1}+b_{d}t^d+b_{d-1}t^{d-1}+\cdots+b_0+\frac{c_1}{t}+\frac{c_2}{t^2}+\cdots,
\]
with $b_j,c_i\in \mathbb{C}$, for $0\leq j \leq d$ and $i\geq 1$.
Let $Q$ be the polynomial part of the Laurent expansion of $Q_1$ and let
\begin{align*}
r(t) & = \frac{d}{\delta}(Q_1(t)-Q(t))+\left(\frac{d}{\delta}\right)^2(Q_1(t^d)-Q(t^d))+\cdots\\
& =\frac{d}{\delta}\left(\frac{c_1}{t}+\frac{c_2}{t^2}+\cdots\right)+\left(\frac{d}{\delta}\right)^2\left(\frac{c_1}{t^d}+\frac{c_2}{t^{2d}}+\cdots\right)+\cdots.
\end{align*}
Let
\begin{equation}\label{psi_H^+}
\psi_H^+(x,y)=s=\phi_H^+(x,y) \int_0^x \frac{\partial\lambda_H^+}{\partial t}(\xi,t)\mathrm{d}\xi +r(t).
\end{equation}
Then H\'{e}non map $H$ in $(s,t)$ coordinate is given by
\[
(s,t)\mapsto \left(\frac{\delta}{d}s+Q(t),t^d\right).
\]
Thus \eqref{psi} holds. Since for sufficiently large $R$ the map $(x,y)\mapsto(s_1,t)$ is a biholomorphism from $U_R^+$ onto its image, $\Phi_H^+$ is injective on $U_R^+$. The identity $\phi_H^+\left(0,\lambda^+_H(0,t)\right)=t$ implies that $b_d=0$.

\medskip
\noindent
Now we prove the last part of the lemma. Let $\epsilon_0>0$. Since $\phi_H^+(x,y)\sim y$, as $\left\lVert (x,y)\right\rVert\to \infty$ in $V_R^+$, there exists a sufficiently large $R$ such that $\frac{\partial\lambda_H^+}{\partial t}(x,t)\sim 1$, i.e.,
  \[
  (1-\epsilon_0)<\left|\frac{\partial\lambda_H^+}{\partial t}(x,t)\right|<(1+\epsilon_0),
 \]
 for $(x,t)\in\left\{(x,t)\in \mathbb{C}^2:\max\{M|x|,RM\}<|t|\right\}$.
Fix $|t|>MR$, and consider the function
\[
\lambda_t : x\mapsto \int_0^x \frac{\partial\lambda_H^+}{\partial t}(\xi,t)\mathrm{d}\xi.
\]
Note that $\lambda_t$ is holomorphic on $\{x\in \mathbb{C}: |x|<|t|/M\}$ and we could choose $M$ large such that $\lambda_t$ is continuous on the boundary $\{x\in \mathbb{C}: |x|=|t|/M\}$. Note that $\lambda_t(0)=0$ and for $|x|=|t|/M$, 
\[
(1-\epsilon_0)|t|/M<|\lambda_t(x)|.
\]
Let $x_1\in \mathbb{C}$ be such that $|x_1|<(1-\epsilon_0)|t|/M$ and consider the holomorphic function
\[
\lambda_{t,x_1} : x\mapsto \lambda_t(x)-x_1
\]
on $\{x:\mathbb{C}:|x|<|t|/M\}$. Then 
\[
\left\lvert\lambda_{t,x_1}(x)-\lambda_t(x)\right\rvert=|x_1|<|\lambda_t(x)|
\]
for $|x|=|t|/M$. Thus by Rouche's theorem, $\lambda_{t,x_1}$ and $\lambda_t$ have the same number of zeros in $\{x\in \mathbb{C}^2: |x|<|t|/M\}$. Thus there exists $x_0\in \{|x|<|t|/M\}$ such that $\lambda_{t,x_1}(x_0)=0$, i.e, $\lambda_t(x_0)=x_1$. Therefore, the disc $\{x\in \mathbb{C}^2 : |x|<(1-\epsilon_0)|t|/M\}$ is in the range of the function $\lambda_t$. Thus for $(x,y)\in U_R^+$, the map
\[
(x,y)\mapsto \left(t \int_0^x \frac{\partial\lambda_H^+}{\partial t}(\xi,t)\mathrm{d}\xi,t\right), 
\]
where $t=\phi_H^+(x,y)$, contains the set $\left\{(s,t)\in \mathbb{C}^2 : |s|<(1-\epsilon_0)|t|^2/M, |t|>MR\right\}$ in its range. Now choose $\rho_1>MR$ large enough such that for $|t|>\rho_1$, $|r(t)|<(1-\epsilon_0)|t|^2/2M$. Let $|s|<(1-\epsilon_0)|t|^2/2M$, then
\[
|s-r(t)|<(1-\epsilon_0)|t|^2/M.
\]
Thus there exists $(x,y)\in U_R^+$ such that $\Phi_H^+(x,y)=(s,t)$.

\end{proof}

\medskip
\noindent
Let $\hat{H}:\hat U_H^+\to \hat U_H^+$ be lift of $H$. Let $\hat{U}_R^+=\{[z,C]\in \hat{V}_R^+ : \pi_H([z,C])\in U_R^+\}$, then (see \cite[Lemma 7.3.8]{MNTU}) 
\begin{equation}\label{hatU_H}
    \hat{U}_H^+=\bigcup_{n\geq 0} \hat{H}^{-n}\left(\hat{U}_R^+\right).
\end{equation}
 Define 
 $$
 \hat{\psi}_H^+[z,C]=\psi_H^+(z), \text{  } \hat{\phi}_H^+[z,C]=\phi_H^+(z),
 $$
 and 
$$
\hat{\Phi}_H^+\left([z,C]\right)= \left(\psi_H^+(z),\phi_H^+(z)\right),
$$
for $[z,C]\in \hat{U}_R^+$.  One can extend $\hat{\phi}_H^+$ and $\hat{\psi}_H^+$ to $\hat{U}_H^+$ using the the functorial equations (\ref{Bottcher}) and (\ref{psi}), respectively. In turn, we can extend $\hat{\Phi}_H^+$ to $\hat{U}_H^+$. Finally, one can prove the following result. 
\begin{thm}
 The map $\hat{\Phi}_H^+:\hat{U}_H^+\to \mathbb{C}\times\mathbb{C\backslash\mathbb{\overline{D}}}$ is a biholomorphism.
\end{thm}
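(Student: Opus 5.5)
The strategy is to show that $\hat{\Phi}_H^+$ is a well-defined holomorphic map on $\hat U_H^+$ that conjugates $\hat H$ to the model map
\[
T(s,t)=\left(\tfrac{\delta}{d}s+Q(t),\,t^d\right),
\]
and then to prove injectivity and surjectivity separately.

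\emph{Extension and equivariance.} On $\hat U_R^+$ the map $\hat\Phi_H^+$ is given, and by Lemma~\ref{prop2.2} it satisfies $\hat\Phi_H^+\circ\hat H=T\circ\hat\Phi_H^+$ wherever both sides are defined. For $w\in\hat H^{-n}(\hat U_R^+)$ we set
\[
\hat\phi_H^+(w)=\left(\hat\phi_H^+(\hat H^n w)\right)^{1/d^n},
\]
where the $d^n$-th root is chosen continuously along the path defining $w$, using $e^{\int_C\omega/d^n}$. This choice is exactly what passing to the $\mathbb Z$-cover makes single valued, since $\alpha_H^+$ is integral on closed loops upstairs. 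We also set
\[
\hat\psi_H^+(w)=\left(\tfrac{d}{\delta}\right)^n\left(\hat\psi_H^+(\hat H^n w)-\sum_{k=0}^{n-1}\left(\tfrac{\delta}{d}\right)^{n-1-k}Q\bigl(\hat\phi_H^+(\hat H^k w)\bigr)\right).
\]
The invariance $H(U_R^+)\subseteq U_R^+$ and the functional equations show that these formulas are independent of $n$. By \eqref{hatU_H}, this gives a global holomorphic map with $\hat\Phi_H^+\circ\hat H=T\circ\hat\Phi_H^+$ on all of $\hat U_H^+$. Note that $T$ is not injective, since $t\mapsto t^d$ is $d$-to-$1$; it is a proper self-cover of $\mathbb C\times(\mathbb C\setminus\overline{\mathbb D})$ of degree $d$.

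\emph{Injectivity.} Suppose $\hat\Phi_H^+(w)=\hat\Phi_H^+(w')$ and choose $n$ with $\hat H^n w,\hat H^n w'\in\hat U_R^+$. Then $\hat\Phi_H^+(\hat H^n w)=\hat\Phi_H^+(\hat H^n w')$, and injectivity of $\Phi_H^+$ on $U_R^+$ together with $\pi_H|_{\hat V_R^+}$ being a biholomorphism gives $\hat H^nw=\hat H^nw'$. Since $\hat H$ is injective, being a lift of the automorphism $H$, we get $w=w'$. The point to check is that $\hat H$ really is a deck-compatible bijection of $\hat U_H^+$. This holds because $H_*$ acts on $\mathbb Z[1/d]$ as multiplication by $d$, which preserves $\mathbb Z$. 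That gives a lift; bijectivity needs $d^{-1}\mathbb Z$ versus $\mathbb Z$ care. I expect injectivity of $\hat H$ on the $\mathbb Z$-cover to be the main subtlety: multiplication by $d$ maps $\mathbb Z$ into $\mathbb Z$ but not onto $\mathbb Z$ from $\mathbb Z[1/d]$, so one must argue using the root-choice description of points instead of group theory alone.

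\emph{Surjectivity.} Take $(s,t)$ with $|t|>1$. Then $T^n(s,t)=(s_n,t^{d^n})$ with
\[
|s_n|\le C\left(\left|\tfrac{\delta}{d}\right|^n|s|+\sum_k |t|^{(d+1)d^{k}}\right)=O\!\left(|t|^{(d+1)d^{n-1}}\right)\ll\rho_2|t|^{2d^n}.
\]
Hence for large $n$ the point $T^n(s,t)$ lies in the domain $\{|s|<\rho_2|t|^2,\ |t|>\rho_1\}$, which by the last part of Lemma~\ref{prop2.2} is contained in $\hat\Phi_H^+(\hat U_R^+)$. So $T^n(s,t)=\hat\Phi_H^+(v)$ for some $v$. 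We then set $w=\hat H^{-n}v$, choosing among the $d^n$ preimage branches the one whose $\hat\phi_H^+$ equals $t$. This amounts to selecting the lift of $H^{-n}$ along a path, and in doing so we adjust $C$ by loops whose $\alpha_H^+$ takes each value in $d^{-n}\mathbb Z/\mathbb Z$. Equivariance then gives $\hat\Phi_H^+(w)=(s,t)$, since $\hat\psi_H^+$ is determined by $\hat\phi_H^+$ along the orbit.

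A bijective holomorphic map between domains in $\mathbb C^2$ is a biholomorphism, which completes the argument.
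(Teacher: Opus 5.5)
\textbf{There is a genuine gap in your injectivity step.} Your extension and surjectivity steps are sound, but injectivity rests on a false claim: $\hat H$ is \emph{not} injective on $\hat U_H^+$. Since $H_*$ acts as multiplication by $d$ on $\pi_1(U_H^+)=\mathbb Z[1/d]$, the map $H\circ\pi_H$ is the covering of $U_H^+$ associated with the subgroup $d\mathbb Z\subset\mathbb Z$. Its lift $\hat H$ is therefore a $d$-sheeted self-covering of $\hat U_H^+$. Concretely, two points $[z,C]$ and $[z,C']$ with $\alpha_H^+(C\bar C')=1/d$ are distinct, but their images differ by a loop of $\alpha_H^+$-value $1\in\mathbb Z$, so $\hat H$ identifies them.

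Your own setup already shows this. If $\hat H$ were injective and $\hat\Phi_H^+$ bijective, the relation $\hat\Phi_H^+\circ\hat H=T\circ\hat\Phi_H^+$ would force $T$ to be injective, which you correctly note it is not. Your surjectivity step, which chooses one of the $d^n$ preimages of $v$ under $\hat H^n$, also contradicts it. So from $\hat H^n w=\hat H^n w'$ you cannot conclude $w=w'$, and as written the injectivity argument fails.

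\textbf{The repair uses the second coordinate.} This is how the paper argues in Step~4 of the proof of Theorem~\ref{Thm3}, where it runs the same argument for $\Omega^c$; the statement itself is quoted from Hubbard--Oberste-Vorth.
Project down: $H^n(\pi_H w)=\pi_H(\hat H^n w)=\pi_H(\hat H^n w')=H^n(\pi_H w')$, so $\pi_H w=\pi_H w'=:z$ because $H$ is injective on $\mathbb C^2$. Write $w=[z,C]$ and $w'=[z,C']$. The equality $\hat\phi_H^+(w)=\hat\phi_H^+(w')$ gives $e^{\int_C\omega}=e^{\int_{C'}\omega}$, i.e.\ $\alpha_H^+(C\bar C')\in\mathbb Z$, hence $w=w'$. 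In other words, the $\hat\phi_H^+$-coordinate (the choice of $d^n$-th root) is what separates the $d^n$ points of a fibre of $\hat H^n$. This is the same fact you already use for surjectivity.

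\textbf{Comparison of the surjectivity arguments.} With this fix, your surjectivity argument is a valid alternative to the paper's. You select the preimage explicitly by its $\hat\phi_H^+$-value, and recover $\hat\psi_H^+=s$ from the affine, nondegenerate dependence of $T^n$ on $s$ for fixed $t$. The paper instead proves injectivity on each $(\hat H^n)^{-1}(\hat U_R^+)$ and then counts: both fibres have $d^n$ points, so injectivity gives surjectivity.
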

\noindent
The above theorem shows that the Riemann domain of the B\"{o}ttcher coordinate $\phi_H^+$ is biholomorphic to $\mathbb{C}\times \mathbb{C\backslash\mathbb{\overline{D}}}$. The results proved in this article can be considered as applications and generalizations of this fundamental theorem.

\section{Construction of an intermediate covering of $\Omega^c$}
\noindent 
In this section, we describe the analytic structure of super-level sets of the Green's function $G_H^+$, namely $\Omega^c=\{G_H^+>c\}$, for any $c>0$.  The construction of the covering of $\Omega^c$ follows the similar track as it is done in \cite{BPV1} for the punctured sub-level sets of the Green's function $\Omega_c=\{0<G_H^+<c\}$, for $c>0$.   

\begin{thm}\label{Thm3}
Let $H$ be a H\'enon map of degree $d$ as defined in (\ref{Henon def}). Then the fundamental group, $\pi_1(\Omega^c)$ of $\Omega^c$ is $\mathbb{Z}[1/d]$. Further, the cover $\hat{\Omega}^c$ corresponding to the subgroup $\mathbb{Z} \leq \mathbb{Z}[1/d]$ is biholomorphic to $\mathbb{C}\times\mathcal{A}^c$, where $\mathcal{A}^c=\{t\in \mathbb{C} : |t|>e^c\}$. Moreover, 

\begin{itemize}
\item[1.]
the lift $\tilde{H}_c:\mathbb{C}\times\mathcal{A}^c\to \mathbb{C}\times\mathcal{A}^{dc}$ of $H|_{\Omega^c}$ is of the form 
\[
\tilde{H}(s,t)=\left(\frac{\delta}{d} s + Q(t), t^d\right),
\]
where $Q(t)=t^{d+1}+b_{d-1}t^{d-1}+ \cdots + b_{0}$ is a monic and centered polynomial of degree $d+1$;

\item[2.]
for each element $[k/d^n] \in \mathbb{Z}[1/d]/\mathbb{Z}$, there exists a unique deck transformation $\gamma_{\frac{k}{d^n}}:\mathbb{C}\times\mathcal{A}^c \to \mathbb{C}\times\mathcal{A}^c$ defined as
\[
\gamma_{\frac{k}{d^n}}(s,t)=\left(s+\frac{d}{\delta}\sum_{l=0}^{\infty}\left(\frac{d}{\delta}\right)^l\left(Q\left(t^{d^l}\right)\right )-Q\left (\left(e^{\frac{2 k \pi i}{d^n}}t\right)^{d^l}\right),e^{\frac{2 k \pi i}{d^n}}t\right).
\]
\end{itemize}
\end{thm}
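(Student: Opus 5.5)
The plan is to reduce everything to the Hubbard--Oberste-Vorth structure of $U_H^+$ recalled in Section~2, using $\Omega^c\subset U_H^+$. The key observation is that on $\hat U_H^+\cong\mathbb{C}\times(\mathbb{C}\setminus\overline{\mathbb{D}})$ we have $G_H^+\circ\pi_H=\log|\hat\phi_H^+|$. This holds on $\hat U_R^+$ because $G_H^+=\log|\phi_H^+|$ on $V_R^+$, and it propagates to all of $\hat U_H^+$ through $G_H^+\circ H=dG_H^+$, $\hat\phi_H^+\circ\hat H=(\hat\phi_H^+)^d$ and \eqref{hatU_H}. Hence
\[
\pi_H^{-1}(\Omega^c)=\{\,|\hat\phi_H^+|>e^c\,\}\cong\mathbb{C}\times\mathcal{A}^c
\]
under $\hat\Phi_H^+$. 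The restriction of the covering $\pi_H$ to this full preimage is again a covering, $\pi_H^{-1}(\Omega^c)\to\Omega^c$, and its total space is simply connected. So it is the universal cover of $\Omega^c$ modulo some group; more precisely, it is connected, and I would identify its deck group.

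\textbf{Fundamental group.} I would run the \cite{HOV} argument with $V_R^+$ replaced by $V_R^+\cap\Omega^c$. For $R$ large, (\ref{L1}) shows that $\{(x,y)\in V_R^+: |\phi_H^+|>e^{c}\}$ is a region in $\Omega^c$ fibred over $\{|t|>\max(e^c,\cdot)\}$ with disc fibres, so every loop there is homotopic to a multiple of $C_m$. Any loop in $\Omega^c$ can be pushed into this region by $H^n$: since $H(\Omega^c)=\Omega^{dc}\subset\Omega^c$ and $H$ is a homeomorphism, $\Omega^c=\bigcup_n H^{-n}(\Omega^{d^nc}\cap V_R^+)$. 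I would then define $\alpha(C)=\frac{1}{2\pi i}\int_C\omega$ with the same form $\omega$ as before. The argument in \cite{HOV} that $\alpha$ is an isomorphism onto $\mathbb{Z}[1/d]$ carries over verbatim: injectivity follows from the pushing argument plus the fact that the local loops are classified by their winding in $\phi_H^+$, and surjectivity follows from $\alpha(H^{-n}C_1)=1/d^n$. It follows that the covering $\hat\Omega^c$ associated to $\mathbb{Z}=\alpha^{-1}(\mathbb{Z})$ is exactly the restriction of $\hat U_H^+$ over $\Omega^c$, since the defining equivalence relation is the same. This gives $\hat\Omega^c\cong\mathbb{C}\times\mathcal{A}^c$ via $\hat\Phi_H^+$.

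\textbf{Item 1.} The lift of $H$ is the restriction of $\hat H$. In $(s,t)$ coordinates it equals $(\frac{\delta}{d}s+Q(t),t^d)$ by Lemma~\ref{prop2.2} on $\hat U_R^+$, and by the identity theorem it equals this everywhere. The lift maps $\mathcal{A}^c$ into $\mathcal{A}^{dc}$ since $|t^d|=|t|^d$. The normalization $b_d=0$ was already obtained in the lemma.

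\textbf{Item 2.} A deck transformation over $\Omega^c$ corresponding to $[k/d^n]$ must multiply $t$ by $e^{2\pi ik/d^n}$, because $\hat\phi_H^+$ changes by $e^{\int\omega}$ along the loop. It must also commute with $\tilde H$ up to the induced action: $\tilde H\circ\gamma_{k/d^n}=\gamma_{k/d^{n-1}}\circ\tilde H$. Writing $\gamma(s,t)=(s+g(t),\zeta t)$, this relation becomes the functional equation
\[
\tfrac{\delta}{d}g(t)+Q(\zeta t)-Q(t)=g'(t^d),
\]
where $g'$ is the function for $\zeta^d$, which is the zero function once $\zeta^{d^n}$-powers are trivial. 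Iterating the equation forward gives the series
\[
g(t)=\frac{d}{\delta}\sum_{l\ge0}\Bigl(\frac{d}{\delta}\Bigr)^l\bigl(Q(t^{d^l})-Q((\zeta t)^{d^l})\bigr).
\]
Because $(\zeta t)^{d^l}=t^{d^l}$ for $l\ge n$, the sum is finite, so convergence is automatic. Uniqueness follows because a deck transformation is determined by the image of one point.

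\textbf{Main obstacle.} The main obstacle is the fundamental group step, in particular injectivity of $\alpha$ on $\Omega^c$. It requires that the trapping region $V_R^+\cap\Omega^{c'}$ have the homotopy type of a circle, which is where I would use the disc-fibre structure given by $\Phi_H^+$ and Lemma~\ref{prop2.2}.
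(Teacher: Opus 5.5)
Your proposal is correct. For the fundamental group and for the deck transformations it follows the paper: integrate $\omega$ over loops, push loops into the trapping region by $H^n$, get surjectivity from $H^{-n}(C_m)$, and iterate $\tilde H_c\circ\gamma_{k/d^{n+1}}=\gamma_{k/d^n}\circ\tilde H_c$ down to $\gamma_1=\mathrm{id}$.

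You genuinely differ on the identification $\hat\Omega^c\cong\mathbb{C}\times\mathcal{A}^c$. The paper rebuilds the Hubbard--Oberste-Vorth machinery on $\Omega^c$ in Steps 3--4. It defines new $\hat\phi_c,\hat\psi_c$ through the functional equations, exhausts $\mathbb{C}\times\mathcal{A}^c$ by $(G_{d^{n-1}c}\circ\cdots\circ G_c)^{-1}(W_{d^nc})$, and proves bijectivity by counting $d^n$ preimages. You instead restrict the already-established biholomorphism $\hat\Phi_H^+$ to $\pi_H^{-1}(\Omega^c)=\{|\hat\phi_H^+|>e^c\}$. For this you need only $G_H^+\circ\pi_H=\log|\hat\phi_H^+|$, which holds on $\hat U_R^+$ and propagates by the functional equations (or by the identity principle for pluriharmonic functions).

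The paper's route is self-contained modulo Lemma~\ref{prop2.2}. Yours uses the HOV theorem as a black box, which is legitimate since it is stated in Section 2, and buys a good deal:
\begin{itemize}
\item It is much shorter.
\item It gives connectedness of $\Omega^c$ (the paper's Step 1) for free, as the image of the connected set $\mathbb{C}\times\mathcal{A}^c$.
\item It shows that the covers for different $c$ sit compatibly inside $\hat U_H^+$. This is what really justifies the paper's remark that $\gamma_{c,k/d^n}$ does not depend on $c$.
\end{itemize}

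It also dissolves what you call the main obstacle. Put $p=\pi_H|_{\pi_H^{-1}(\Omega^c)}$. By the definition of $\sim_H$, the image of $p_*$ is $\alpha^{-1}(\mathbb{Z})$, and this contains $\ker\alpha$. Moreover $p^*\omega=dt/t$ in the $(s,t)$-coordinates, so $\alpha\circ p_*$ is the winding number in $t$, an isomorphism $\pi_1(\mathbb{C}\times\mathcal{A}^c)\to\mathbb{Z}$. Hence $\ker\alpha$ is trivial, with no trapping-region homotopy argument at all. If you keep that argument anyway, use $U_R^+\cap\Omega^{c'}$ in the $(x,t)$-coordinates, or $V_R^+\cap\{|y|>e^{c'+C}\}$, where the disc-fibred structure is evident. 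The homotopy type of $V_R^+\cap\Omega^{c'}$ itself is not obvious.

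Two small slips:
\begin{itemize}
\item $\mathbb{C}\times\mathcal{A}^c$ is not simply connected. Its $\pi_1$ is $\mathbb{Z}$, which is exactly why it is the cover attached to $\mathbb{Z}\le\mathbb{Z}[1/d]$.
\item The ansatz $\gamma(s,t)=(s+g(t),\zeta t)$ needs justification. A priori the first component is $\beta(t)s+g(t)$. The same commutation relation gives $\beta_\zeta(t)=\beta_{\zeta^d}(t^d)$, so $\beta\equiv 1$ by induction from $\gamma_1=\mathrm{id}$.
\end{itemize}
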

\begin{proof}
{\it Step 1:} In this step we show that $\Omega^c$ is connected. Let $w_1$, $w_2 \in \Omega^c$. Let $c_1>0$ be such that 
\[
\min\left\{G_H^+(w_1),G_H^+(w_2)\right\}>c_1>c.
\]
Now $G_H^+(x,y)\sim \log|y|$, as $|y|\to \infty$ in $V_R^+$. Thus, for $\epsilon>0$, there exists $R_\epsilon>1$ such that 
\[
\log^+|y|-\epsilon <G_H^+(x,y)<\log^+|y|+\epsilon,
\] 
for $(x,y)\in V_{R_\epsilon}^+$. Let $n_0>1$ be such that $H^{n_0}(w_1)$, $H^{n_0}(w_2) \in V_{R_\epsilon}^+$ and $2\epsilon < d^{n_0}(c_1-c)$. Without loss of generality, let $\lvert {\left (H^{n_0}(w_1)\right)}_2\rvert \leq \lvert (H^{n_0}(w_2))_2\rvert$, where ${((x,y))}_2=y$ and let $\sigma$ be a path in $V_{R_\epsilon}^+$ from $w_1$ to $w_2$ such that for every $(x,y)\in \sigma$,
\[
\left \lvert (H^{n_0}(w_1))_2\right\rvert \leq |y|.
\]
Now for $(x,y)\in \sigma$,
\begin{align*}
    G_H^+(x,y) & >\log^+|y|-\epsilon \geq \log^+|(H^{n_0}(w_1))_2|-\epsilon \\
    & > G_H^+(H^{n_0}(w_1))-2\epsilon >d^{n_0}c_1-d^{n_0}(c_1-c)=d^{n_0}c.
\end{align*}
Therefore, $H^{-n_0}(\sigma)$ is a path from $w_1$ to $w_2$ in $\Omega^c$.

\medskip 
\noindent
{\it Step 2:} In this step we show that the fundamental group of $\Omega^c$ is  $\mathbb{Z}[1/d]$.
Recall from Section 2 that for sufficiently large constant $M$,
\[
U_R^+ = \{(x,y)\in V_R^+ : \left\lvert \phi_H^+(x,y)\right \rvert > M \max\{R, |x|\}\}, 
\]
and \(H(U_R^+) \subset U_R^+\). There exist large $n\geq 1$ such that $H^n(V_R^+)\subset U_R^+$. Therefore, 
\[
U_H^+ = \bigcup_{n \geq 0} H^{-n}(U_R^+).
\]
Let $V^+_{R,c} = V_R^+\cap \Omega^c \neq \emptyset$ and $U^+_{R,c} = U_R^+\cap \Omega^c\subset V^+_{R,c}$. It is easy to see that for $n\geq 0$, 
\[H(V^+_{R,d^n c})\subset V^+_{R,d^{n+1}c} \text{ and } H(U^+_{R,d^n c})\subset U^+_{R,d^{n+1}c}.
\]
Thus we have the following increasing sequences of open sets 
\[
V^+_{R,c} \subset H^{-1}(V^+_{R,d c})\subset H^{-2}(V^+_{R,d^2 c})\cdots,\]
and 
\[
U^+_{R,c} \subset H^{-1}(U^+_{R,d c})\subset H^{-2}(U^+_{R,d^2 c})\cdots.
\]
Also, for $m, n\geq 0$, $H^n:\Omega^{d^m c} \to \Omega^{d^{n+m} c}$ is a biholomorphism and 
\begin{equation}\label{eq6.1}
\Omega^c = \bigcup_{n\geq 0}H^{-n}\left(V^+_{R,d^n c}\right)=\bigcup_{n\geq 0}H^{-n}\left(U^+_{R,d^n c}\right).
\end{equation}
\noindent
Consider the closed 1-form defined on $V_R^+$ by 
$
\omega={\mathrm{d}\phi_H^+}/{\phi_H^+}.
$
This 1-form can be extended to $U_H^+$ using the functional equation $\phi_H^+\circ H=(\phi_H^+)^d$ (see \cite[Lemma 7.3.2]{MNTU}). One can also prove that $H^*(\omega)=d\omega$. For a closed path $C$ in $\Omega^c$, define
\[
\alpha(C)=\frac{1}{2\pi i}\int_C \omega.
\]
If $C$ is a path in $V^+_{R,c}$, then 
\[
\alpha(C)=\frac{1}{2\pi i}\int_C \frac{\mathrm{d}\phi_H^+}{\phi_H^+}=\frac{1}{2\pi i}\int_C \frac{\mathrm{d}y}{y} \in \mathbb{Z}.
\]
If $C$ is an arbitrary path in $\Omega^c$, by \eqref{eq6.1}, for some $n\geq 1$, $H^n(C)\in V^+_{R,c}$. Therefore, 
\[
\alpha\left(H^n(C)\right)=\frac{1}{2\pi i}\int_{H^n(C)}\omega = \frac{1}{2\pi i}\int_C (H^n)^*(\omega)=d^n\frac{1}{2\pi i}\int_C \omega= d^n\alpha(C).
\]
Since $\alpha(H^n(C))\in \mathbb{Z}$, it follows that $\alpha(C)\in \mathbb{Z}[1/d].$

\medskip 
\noindent
Now our aim is to show that $\alpha:\pi_1(\Omega^c)\to \mathbb{Z}[1/d]$ is a group isomorphism. To establish the surjectivity, note that for sufficiently large $R'$ and for $m \in \mathbb{Z}$,
\[
C_m : t \mapsto \bigl(0, R'e^{2\pi i m t}\bigr)
\]
is a path in $V^+_{R,c}$ such that $\alpha(C_m) = m$. Moreover, for $n \in \mathbb{N}$, $(H^n)^{-1}(C_m)$ is a closed path in $\Omega^c$ that satisfies
\[
\alpha\bigl((H^n)^{-1}(C_m)\bigr) = \frac{m}{d^n}.
\]
\noindent
To prove the injectivity, suppose $C$ is a closed path in $\Omega^c$ such that $\alpha(C)=0$. This  implies $\alpha(H^n(C))=0$,  for all $n\geq 0$. We show that $C$ is null-homotopic in $\Omega^c$. Let $c_2>c_1>c$ be such that 
\[
C\subset \left \{(x,y)\in \mathbb{C}^2:c_1<G_H^+(x,y)<c_2\right \}.
\]
For $\epsilon>0$, let $n_\epsilon$ be such that $d^{n_\epsilon}(c_1-c)>2\epsilon$ and $R_\epsilon>R$ be such that $H^{n_\epsilon}(C)\subset V^+_{R_\epsilon}$. Let
\[
R_1=\inf\left\{|y|: \text{ there exists $x\in \mathbb{C}$ such that } (x,y)\in H^{n_\epsilon}(C)\right\},
\]
and 
\[
R_2=\sup\left\{|y|: \text{ there exists $x\in \mathbb{C}$ such that } (x,y)\in H^{n_\epsilon}(C)\right\}.
\]
For $(x,y)\in V^+_{R_\epsilon}\cap \{(x,y)\in \mathbb{C}^2 : R_1\leq |y| \leq R_2\}$, there exists $(x_1,y_1)\in H^{n_\epsilon}(C)$ such that $|y|=|y_1|$. Also, note that
\[
G_H^+(x,y)\geq \log^+|y|-\epsilon=\log^+|y_1|-\epsilon\geq G_H^+(x_1,y_1)-2\epsilon\geq d^{n_\epsilon}c_1-2\epsilon > d^{n_\epsilon}c.
\]
Thus $(x,y)\in \Omega^{d^{n_\epsilon}c}$, i.e., $V^+_{R_\epsilon}\cap \{(x,y)\in \mathbb{C}^2 : R_1\leq |y| \leq R_2\} \subset \Omega^{d^{n_\epsilon}c}.$ Now since $\alpha(H^{n_\epsilon}(C))=0$, it follows that $H^{n_\epsilon}(C)$ is null homotopic in  $V^+_{R_\epsilon}\cap \{(x,y)\in \mathbb{C}^2 : R_1-\epsilon< |y| < R_2+\epsilon\}\subset \Omega^{d^{n_\epsilon}c}$. Therefore, $C$ is null homotopic in $\Omega^c$.

\medskip 
\noindent
{\it Step 3:} 
In this step we construct the intermediate covering of $\Omega^c$ corresponding to the subgroup $\mathbb{Z}\leq \mathbb{Z}[1/d]$. 
Fix $a \in V_R^+\cap \Omega^c$. Define an equivalence relation on the set 
$$ 
\left\{(z,C) : z \in \Omega^c \text{ and } C \text{ is a path from $a$ to $z$ in }\Omega^c\right\}
$$
 by
\[
(z,C)\sim (z',C') \text{ if and only if } z=z' \text{ and } [C\bar C'] \in \mathbb{Z}.
\] 
Let 
$$
\hat \Omega^c = \{(z,C): z \in \Omega^c \text{ and } C \text{ is a path from $a$ to $z$ in }\Omega^c\}/\sim
$$
be the set of equivalence classes. Define 
$
\hat \pi^c: \hat \Omega^c \to \Omega^c \text{ by } [(z,C)]\mapsto z.
$
We pull back the complex structure of $\Omega^c$ to give a complex structure to $\hat{\Omega}^c$ so that the map $\hat \pi^c$ is holomorphic. In a similar manner, for $n\geq 0$, we can construct a covering $\hat \Omega^{d^n c}$ of $\Omega^{d^n c}$ with the base point $H^n(a)$. For $n\geq 0$, define 
\[
\hat{H}_{d^n c}:\hat{\Omega}^{d^n c} \to \hat{\Omega}^{d^{n+1} c} \text{ \hspace{1mm} by \hspace{1mm}} [(z,C)] \mapsto [(H(z),H(C))].
\]
Let 
$$
\hat{H}^{n}_{c}=\hat{H}_{d^{n-1}c}\circ \cdots \circ \hat{H}_c : \hat{\Omega}^c \to\hat{\Omega}^{d^n c},
$$
then $\hat{H}^{n}_{c}([(z,C)])=[(H^n(z),H^n(C))]$.
For $n\geq 0$, define $\hat{\phi}_{d^n c}:\hat \Omega^{d^n c} \to \mathbb{C}\backslash\mathbb{\overline{D}} $ by
\[
[(z,C)]\mapsto \phi_H^+(H^n(a))\left(e^{\int_C \omega}\right).
\]
Then 
\begin{equation}\label{eq6.2}
\hat{\phi}_{d^{n+1} c}\left(\hat{H}_{d^n c}\left([(z,C)]\right)\right)=\phi_H^+\left(H^{n+1}(a)\right)\left(e^{\int_{H(C)}\omega}\right)=\left(\phi_H^+(H^n(a))\left(e^{\int_C\omega}\right)\right)^d=\left(\hat{\phi}_{d^n c}\left([(z,C)]\right)\right)^d,
\end{equation}
for $[z,C]\in \hat{\Omega}^{d^nc}$. Inductively we get,
\begin{equation}\label{eq6.3}
\hat{\phi}_{d^n c} \circ \hat{H}^{n}_{c}=\hat{\phi}_{d^n c} \circ \hat{H}_{d^{n-1}c}\circ \cdots \circ \hat{H}_c = \left(\hat{\phi}_{d^{n-1}c}(\hat{H}_{d^{n-2}c}\circ \cdots \circ \hat{H}_c)\right)^d=\cdots= \left(\hat{\phi}_c\right)^{d^n}.
\end{equation}
Let 
\[\hat{V}^+_{R,d^n c}=\{[(z,C)]\in \hat{\Omega}^{d^n c} : z\in V_{R,d^n c}^+, C \text{ is a path from $H^n(a)$ to $z$ in } V_{R,d^n c}^+\}
\]
and 
\[
\hat{U}^+_{R,d^n c}=\{[(z,C)]\in \hat{V}^+_{R,d^n c} : z \in U^+_{R,d^n c}\}.
\]
Then using \eqref{eq6.1}, it is easy to see that  
\begin{equation}\label{eq6.4}
\hat{\Omega}^c= \bigcup_{n\geq 0}(\hat{H}^{n}_{c})^{-1}(\hat{V}^+_{R,d^n c})=\bigcup_{n\geq 0}(\hat{H}^{n}_{c})^{-1}(\hat{U}^+_{R,d^n c}),
\end{equation}
where both the unions are increasing.
Recall $\psi_H^+$ defined in Lemma \ref{prop2.2}.
For $[(z,C)]\in \hat{U}^+_{R,c}$, define  $\hat \psi_c\left([(z,C)]\right)=\psi_H^+(z)$. If $[(z,C)]\in (\hat{H}_c)^{-1}\left(\hat{U}^+_{R,d c}\right)$, then \eqref{psi} gives
\[
\hat{\psi}_{d c}\left(\hat{H}_c\left([(z,C)]\right)\right)=\psi_H^+\left(H(z)\right)=\left(\frac{\delta}{d}\right)\psi_H^+(z)+Q\left(\phi_H^+(z)\right).
\]
If $[(z,C)]\in \hat{\Omega}^c$, then by \eqref{eq6.4} there exists an $n\geq 1$ such that $\hat{H}^n_c\left([(z,C)]\right)\in \hat{U}^+_{R,d^n c}$. Thus using the functorial equation \eqref{psi} satisfied by $\psi_H^+$, we get,
\[
\hat{\psi}_{d^n c}\left(\hat{H}^n_c\left([(z,C)]\right)\right)=\psi_H^+\left(H^n(z)\right)=\left(\frac{\delta}{d}\right)^n\psi_H^+(z)+\left(\frac{\delta}{d}\right)^{n-1}Q\left(\phi_H^+(z)\right)+\cdots+Q\left(\left(\phi_H^+(z)\right)^{d^{n-1}}\right).
\]
So, set
\[
\hat \psi_c\left([(z,C)]\right)=\left(\frac{d}{\delta}\right)^n\hat{\psi}_{d^n c}\left(\hat{H}^n_c\left([(z,C)]\right)\right)-\left(\frac{d}{\delta}\right)^n Q\left(\hat \phi_{d^{n-1}c}\left(\hat H^{n-1}_c\left([(z,C)]\right)\right)\right)+\cdots+\left(\frac{d}{\delta}\right)Q\left(\hat \phi_c\left([(z,C)]\right)\right).
\]
Similarly, for $n\geq 0$, we can define holomorphic functions $\hat{\psi}_{d^n c}:\hat{\Omega}^{d^n c}\to \mathbb{C}$ such that for $[(z,C)]\in \hat{\Omega}^{d^n c}$, we have
\begin{equation}\label{eq6.5}
\hat{\psi}_{d^{n+1} c}\circ \hat{H}_{d^n c}\left([(z,C)]\right)=\left(\frac{\delta}{d}\right)\hat{\psi}_{d^n c}\left([(z,C)]\right)+Q\left(\hat{\phi}_{d^n c}\left([(z,C)]\right)\right).
\end{equation}

\medskip
\noindent
{\it Step 4:}
Now we show that $\hat{\Omega}^c$ is biholomorphic to $\mathbb{C}\times \mathcal{A}^c$. First note that if $[(z,C)]\in \hat{\Omega}^c$, then by \eqref{eq6.4} there exist an $n>1$ such that $\hat{H}^n_c\left([(z,C)]\right)\in \hat{V}^+_{d^n c}$. Now
\[
G_H^+=\log\vert\phi_H^+\vert,
\] 
on $V_R^+$. Thus
\[
\log\left\vert\hat{\phi}_{d^n c}\left(\hat{H}^n_c\left([(z,C)]\right)\right)\right\vert=\log\left\vert\phi_H^+\left(H^n(z)\right)\right\vert=G_H^+\left(H^n(z)\right)=d^n G_H^+(z)>d^n c.
\]
Thus, \eqref{eq6.3} gives 
\[
d^n \log|\hat{\phi}_c([(z,C)])|>d^n c \implies \log|\hat{\phi}_c([(z,C)])|>c.
\]
Thus the image of $\hat{\Omega}^c$ under the function $\hat{\phi}_c$ is contained in $\mathcal{A}^c$.
For $n\geq 0$, define $\hat{\Phi}_{d^n c}:\hat{\Omega}^{d^n c}\to \mathbb{C}\times \mathcal{A}^{d^n c}$ by 
\[
[(z,C)]\mapsto\left(\hat{\psi}_{d^n c}\left([(z,C)]\right),\hat{\phi}_{d^n c}\left([(z,C)]\right)\right),
\]
and $G_{d^n c}:\mathbb{C}\times \mathcal{A}^{d^n c}\to \mathbb{C}\times\mathcal{A}^{d^{n+1} c}$ by 
\[
(s,t)\mapsto\left(\frac{\delta}{d}s+Q(t),t^d\right).
\]
It follows from \eqref{eq6.2} and \eqref{eq6.5}, that for $n\geq 0$, 
\[
\begin{tikzcd}
\mathbb{C}\times\mathcal{A}^{d^n c}
  \arrow[r, "G_{d^n c}"]
&
\mathbb{C}\times\mathcal{A}^{d^{n+1} c}
\\
\hat{\Omega}^{d^n c}
  \arrow[u, "\hat{\Phi}_{d^n c}"]
  \arrow[r, "\hat{H}_{d^n c}"']
&
\hat{\Omega}^{d^{n+1} c}
  \arrow[u, "\hat{\Phi}_{d^{n+1} c}"']
\end{tikzcd}
\]
the above diagram commutes, i.e.,
\begin{equation}\label{eq6.6}
\hat{\Phi}_{d^{n+1}c}\circ \hat{H}_{d^n c}= G_{d^n c}\circ \hat{\Phi}_{d^n c}.
\end{equation}
Inductively, for $n\geq 0$, we get
\begin{equation}\label{eq6.7}
\hat{\Phi}_{d^{n+1}c}\circ \hat{H}_c^{n+1}=\hat{\Phi}_{d^{n+1}c}\circ \hat{H}_{d^n c}\circ \cdots \circ \hat{H}_c =G_{d^n c}\circ\cdots\circ G_c\circ\hat{\Phi}_c.
\end{equation}
For $n\geq 0$, let $W_{d^n c}=\hat{\Phi}_{d^n c}\left(\hat{U}^+_{R,d^n c}\right).$ 
We claim that 
\[
\bigcup_{n=0}^{\infty}\left(G_{d^{n-1}c}\circ\cdots\circ G_c\right)^{-1}\left(W_{d^n c}\right)=\mathbb{C}\times\mathcal{A}^c,
\] 
where, by convention, when $n=0$, $\left(G_{d^{n-1}c}\circ\cdots\circ G_c\right)^{-1}\left(W_{d^n c}\right)=W_c$.
Since $\hat{\Phi}_{dc}\circ \hat{H}_{c}= G_{c}\circ \hat{\Phi}_{c}$ and $\hat{H}_c\left(\hat{U}^+_{R,c}\right)\subset\hat{U}^+_{R,dc}$, $G_c\left(W_c\right)\subset W_{dc}$. Similarly, it follows from \eqref{eq6.4} and \eqref{eq6.6} that the above union is increasing.
To see the claim, let $(s,t)\in \mathbb{C}\times\mathcal{A}^c$, then 
\[
\left(G_{d^{n-1}c}\circ\cdots\circ G_c\right)(s,t)=\left(\left(\frac{\delta}{d}\right)^{n}s+\left(\frac{\delta}{d}\right)^{n-1}Q(t)+\left(\frac{\delta}{d}\right)^{n-2}Q(t^d)+\cdots+Q(t^{d^{n-1}}),t^{d^n}\right).
\]
Since by Lemma \ref{prop2.2}, $\{(s,t):|t|>\rho_1,|s|<\rho_2|t|^2\}\subset\Phi_H^+(U_R^+),$ for large $n\geq 1$ 
\[
\left(G_{d^{n-1}c}\circ\cdots\circ G_c\right)(s,t)\in \Phi_H^+(U_R^+).
\]
Let $z\in U_R^+$ such that $\Phi_H^+(z)=\left(\psi_H^+(z),\phi_H^+(z)\right)=\left(G_{d^{n-1}c}\circ\cdots\circ G_c\right)(s,t)$, then 
\[
\vert\phi_H^+(z)\vert=\vert t\vert^{d^n}>e^{d^n c}\implies G_H^+(z)>d^n c.
\] 
Thus, for some $n$, $G_{d^{n-1}c}\circ\cdots\circ G_c(s,t)\in \Phi_H^+(U^+_{R,d^n c})=\hat{\Phi}_{d^n c}\left(\hat{U}^+_{d^n c}\right)=W_{d^n c}$.  The other inclusion is trivial.
Now we will show that 
\begin{equation}\label{eq6.8}
\hat{\Phi}_c:\hat{\Omega}_c=\bigcup_{n\geq 0}\left(\hat{H}_c^n\right)^{-1}\left(\hat{U}^+_{R,d^n c}\right)\to \mathbb{C}\times\mathcal{A}^c=\bigcup_{n\geq 0}\left(G_{d^{n-1}c}\circ\cdots\circ G_c\right)^{-1}\left(W_{d^n c}\right)
\end{equation}
is a biholomorphism. Note that both the unions in \eqref{eq6.8} are increasing. Let $[(z,C)]\in \left(\hat{H}_c^n\right)^{-1}\left(\hat{U}^+_{R,d^n c}\right)$, then 
\begin{align*}
\hat{\Phi}_c\left([(z,C)]\right)\in & \left(G_{d^{n-1}c}\circ\cdots\circ G_c\right)^{-1}\left(\left(\hat{\Phi}_{d^n c}\circ\hat{H}^n_c\right)\left(\left(\hat{H}_c^n\right)^{-1}\left(\hat{U}^+_{R,d^n c}\right)\right)\right)\\
& =\left(G_{d^{n-1}c}\circ\cdots\circ G_c\right)^{-1}\left(W_{d^n c}\right).
\end{align*}
Thus, it is enough to show that for every $n\geq 0$, 
\[
\hat{\Phi}_c:\left(\hat{H}_c^n\right)^{-1}\left(\hat{U}^+_{R,d^n c}\right)\to \left(G_{d^{n-1}c}\circ\cdots\circ G_c\right)^{-1}\left(W_{d^n c}\right)
\]
is a bijection. To prove the injectivity, let $[(z,C)],[(z',C')]\in \left(\hat{H}_c^n\right)^{-1}\left(\hat{U}^+_{R,d^n c}\right)$ be such that 
\[
\hat{\Phi}_c\left([(z,C)]\right)=\hat{\Phi}_c\left([(z',C')]\right)\implies \left(\hat{\Phi}_{d^{n+1}c}\circ \hat{H}^n_c\right)\left([(z,C)]\right)=\left(\hat{\Phi}_{d^{n+1}c}\circ \hat{H}^n_c\right)\left([(z',C')]\right),
\]
which in turn gives 
\begin{align*}
    \Phi_H^+(H^n(z))=\Phi_H^+(H^n(z'))\implies H^n(z)=H^n(z')\implies z=z'.
\end{align*}
Now $\hat{\phi}_c([(z,C)])=\hat{\phi}_c([(z',C')])$ gives $\int_{C\overline{C'}}\omega=0$. Hence $[(z,C)]=[(z',C)]$. Now we prove surjectivity. Let $(s,t)\in W_{d^n c}$, then there exists $[(z,C)]\in \hat U^+_{R,d^n c}$ such that $\hat{\Phi}_{d^n c}\left([(z,C)]\right)=(s,t)$. Now \eqref{eq6.7} gives that $\hat{\Phi}_c$ maps elements of $\{(\hat{H}_c^n)^{-1}\left([(z,C)]\right)\}$ to elements of  $\{\left(G_{d^{n-1}c}\circ\cdots\circ G_c\right)^{-1}(s,t)\}$. Since both the sets has $d^n$ elements, thus injectivity implies surjectivity.

\medskip
\noindent
Further, it follows from \eqref{eq6.2} and \eqref{eq6.5}, that for $n\geq 0$, the lift $\tilde{H}_{d^n c}:\mathbb{C}\times\mathcal{A}^{d^n c} \to \mathbb{C}\times\mathcal{A}^{d^{n+1} c}$ of $H:\Omega^{d^n c}\to\Omega^{d^{n+1} c}$ is given by 
\[
\tilde{H}_{d^n c}(s,t)=\left(\frac{\delta}{d}s+Q(t),t^d\right).
\]
 \medskip
 \noindent
{\it Step 5:}
In this step, we show that for every $\frac{k}{d^n}\in \mathbb{Z}[1/d]/\mathbb{Z}$, there exists a unique deck transformation $\gamma_{c,\frac{k}{d^n}}:\mathbb{C}\times\mathcal{A}_c\to\mathbb{C}\times\mathcal{A}_c$ defined by
\begin{equation}\label{deck}
\gamma_{c,\frac{k}{d^n}}(s,t)=\left(s+\frac{d}{\delta}\sum_{l=0}^{\infty}\left(\frac{d}{\delta}\right)^l\left(Q(t^{d^l})-Q\left((e^{2\pi i k/d^n}t)^{d^l}\right)\right),\,e^{2\pi i k/d^n}t\right).
\end{equation}
Since the expression for $\gamma_{c,\frac{k}{d^n}}$ is independent of $c$, we suppress the dependence on $c$ and simply write
\[
\gamma_{c,\frac{k}{d^n}}=\gamma_{\frac{k}{d^n}}.
\]
First we prove that for every $k\geq1,$ $n\ge  0$ the following diagram commute,
\[
\begin{tikzcd}
\mathbb{C}\times\mathcal{A}^{c}
  \arrow[d, "\gamma_{\frac{k}{d^{n+1}}}"']
  \arrow[r, "\hat{H}_c"]
&
\mathbb{C}\times\mathcal{A}^{d c}
  \arrow[d, "\gamma_{\frac{k}{d^{n}}}"]
\\
\mathbb{C}\times\mathcal{A}^{c}
  \arrow[r, "\hat{H}_{c}"']
&
\mathbb{C}\times\mathcal{A}^{d c}
\end{tikzcd}
\]
that is, $\tilde{H}_c\circ \gamma_{\frac{k}{d^{n+1}}}=\gamma_{\frac{k}{d^n}}\circ\tilde{H}_c$.
Note that $\hat{\pi}^c\circ\tilde{H}_c\circ\gamma_{\frac{k}{d^{n+1}}}=H\circ\hat{\pi}^c$. Thus both $\tilde{H}_c\circ\gamma_{\frac{k}{d^{n+1}}}$ and $\tilde{H}_c$ are lifts of $H$. Since two lifts of $H$ differ by a unique deck transformation. Thus $\tilde{H}_c\circ\gamma_{\frac{k}{d^{n+1}}}=\mathcal{\Phi}\left(\gamma_{\frac{k}{d^{n+1}}}\right)\circ\tilde{H}_c$. Clearly, $\mathcal{\Phi}$ is a homomorphism on the group of deck transformations. Since $H$ induces multiplication by $d$ on the fundamental group of $\Omega^c$, $\mathcal{\Phi}\left(\gamma_{\frac{k}{d^{n+1}}}\right)=\gamma_{\frac{k}{d^{n}}}$. Thus we have commutativity of above diagram.
Now the relation $\tilde{H}_c\circ \gamma_{\frac{1}{d^{n+1}}}=\gamma_{\frac{1}{d^n}}\circ\tilde{H}_c$ allows us to determine $\gamma_{1/d^n}$ inductively. For $n=0$, the map $\gamma_1$ is the identity. Assume that, for some $n\geq 0$, the deck transformation $\gamma_{1/d^n}$ is given by \eqref{deck}, and let $\gamma_{1/d^{n+1}}(s,t)=(s_1,t_1).$
Then
\[
\left(\frac{\delta}{d}s_1+Q(t_1),\,t_1^d\right)
=
\left(
\frac{\delta}{d}s+Q(t)
+\frac{\delta}{d}\sum_{l=0}^{\infty}\left(\frac{d}{\delta}\right)^l
\left(Q\left(t^{d^{l+1}}\right)-
Q\left(\left(e^{2\pi i/d^{n+1}}t\right)^{d^{l+1}}\right)\right),
\,e^{2\pi i/d^n}t^d
\right).
\]
Hence,
\begin{align*}
t_1^d
&=
\left(e^{2\pi i/d^{n+1}}t\right)^d,\\
s_1
&=
s+\frac{d}{\delta}\bigl(Q(t)-Q(t_1)\bigr)
+\frac{d}{\delta}\sum_{l=0}^{\infty}\left(\frac{d}{\delta}\right)^{l+1}
\left(
Q(t^{d^{l+1}})
-
Q\!\left((e^{2\pi i/d^{n+1}}t)^{d^{l+1}}\right)
\right).
\end{align*}
Comparing above equations, we obtain precisely the formula in \eqref{deck}. This completes the induction for the case $k=1$. Finally, using the identity $\gamma_{\frac{k+1}{d^n}}=\gamma_{\frac{k}{d^n}}\circ\gamma_{\frac{1}{d^n}},$ inductively we  get $\gamma_{k/d^n}$ for all ${k}/{d^n}\in\mathbb{Z}[1/d]/\mathbb{Z}$, proving the claim.

%In \cite{HOV}, Hubbard and Obereste-Vorth gave a neat and useful description of intermediate covering $\tilde{U}_H^+$ of $U_H^+$ corresponding to the subgroup $\mathbb{Z}\subseteq \mathbb{Z}[1/d]$. It was shown that $\tilde{U}_H \cong \mathbb{C}\times (\mathbb{C}\setminus{\bar{\mathbb{D}}})$

\section{Lifts of automorphisms of  $\Omega^c$ and $U_H^+$}
In this section, we study the form of lifts of automorphisms of $\Omega^c$ and $U_H^+$ using an idea of Bousch in \cite{Bousch}. Also see \cite[Section~6]{BPV}, where automorphisms of punctured Short $\mathbb{C}^2$ are studied. Recall from Section 2 and Section 3, respectively, that the covering of $U_H^+$ corresponding to the subgroup $\mathbb{Z} \leq \mathbb{Z}[1/d]$ is $\mathbb{C} \times \mathbb{C}\backslash\mathbb{\overline{D}}$, and the covering of $\Omega^c$ corresponding to the subgroup $\mathbb{Z} \leq \mathbb{Z}[1/d]$ is $\mathbb{C} \times \mathcal{A}^c$, with $\mathcal{A}^c=\{t\in\mathbb{C}:\vert t\vert >e^c\}$. 

\begin{thm}\label{lift of UH+}
(i) Let $f:\Omega^c \to \Omega^c$ be an automorphism which lifts to an automorphism on the covering space $\mathbb{C}\times \mathcal{A}^c$. Then there exists a lift $\tilde{f}:\mathbb{C}\times\mathcal{A}^c \to \mathbb{C}\times\mathcal{A}^c$ of $f$ of the form $\tilde{f}(s,t)=(\beta s +\gamma,\alpha t)$ with $\beta^{d-1}=1$, $\beta = \alpha^{d+1}$, and $\gamma \in \mathbb{C}$.

\medskip 
\noindent 
(ii) Let $f:U_H^+ \to U_H^+$ be an automorphism which lifts to an automorphism on the covering space $\mathbb{C}\times\mathbb{C}\backslash\mathbb{\overline{D}}$. Then there exists a lift $\tilde{f}:\mathbb{C}\times\mathbb{C}\backslash\mathbb{\overline{D}} \to \mathbb{C}\times\mathbb{C}\backslash\mathbb{\overline{D}}$ of $f$ of the form $\tilde{f}(s,t)=(\beta s +\gamma,\alpha t)$ with $\beta^{d-1}=1$, $\beta = \alpha^{d+1}$, and $\gamma \in \mathbb{C}$.

\end{thm}
\begin{proof}
(i) Let $\tilde{f}$ be of the form  
\[
\tilde{f}(s,t)=(\tilde{f}_1(s,t),\tilde{f}_2(s,t)).
\]
Fix $t\in \mathcal{A}^{c}$, then 
\[
s\mapsto \tilde{f}_2(s,t)
\]
is an entire function with image in $\mathcal{A}^{c}$, hence constant, i.e., $\tilde{f}_2(s,t)=\tilde{f}_2(t).$ For each fixed $t$, the map 
\[
s\mapsto\tilde{f_1}(s,t)
\]
is an automorphism of $\mathbb{C}$, and hence is of the form $\tilde{f}_1(s,t)=\beta(t)s+\gamma(t)$, with $\beta,\gamma:\mathcal{A}^{c}\to\mathbb{C}$ holomorphic functions. Let $\tilde{f}^*(s,t)=(\tilde{f}_1^*(s,t),\tilde{f}_2^*(t))$ be the inverse of $\tilde{f}$, then 
\[
\tilde{f}_2\circ\tilde{f}_2^*=Id|_{\mathcal{A}^{c}} \text{\hspace{1mm} and } \tilde{f}_2^*\circ\tilde{f}_2=Id|_{\mathcal{A}^{c}}.
\]
Thus $\tilde{f}_2:\mathcal{A}^{c}\to\mathcal{A}^{c}$ is a biholomorphism. Consider the function $h_c:\mathbb{D}^*\to \mathcal{A}^{c}$ defined by 
$h_c(t)=e^{c}/t.$
Then
$h:\mathbb{D}^*\to \mathbb{D}^*$ defined by $h(t)=h_c^{-1}\circ\tilde{f}_2\circ h_c(t)$ is a biholomorphism. Since $\tilde{f}_2:\mathcal{A}^c\to \mathcal{A}^c$ is a biholomorphism, it cannot have an essential singularity at infinity. If possible, let $\tilde{f}_2$ has a removable singularity at infinity and let $\lim_{t\to \infty}\tilde{f}_2(t)=t_0$, with $\vert t_0\vert=e^c$. Define $\tilde{g}_2:\{t\in \mathbb{C}:\vert t\vert<e^{-c}\}\to \{t\in \mathbb{C}:\vert t\vert <e^{-c}\}$ by $\tilde{g}_2(t)=1/\tilde{f}_2(1/t)$. Then $\left\vert \tilde{g}_2(0)\right\vert =\left\vert t_0^{-1}\right\vert =e^{-c}$, so by maximum modulus function, $\tilde{g}_2$ is a constant function, which is a contradiction. Thus, $\tilde{f}_2$ has a pole at $\infty$.
Thus, if we define $h(0)=0$, then $h:\mathbb{D}\to\mathbb{D}$ is an automorphism. Thus, $h(t)=\alpha' t$ with $|\alpha'|=1$ and $\tilde{f}_2(t)=\alpha t$ with $|\alpha|=1.$
Now for any $k\geq 1$, $n\geq 0$, $\tilde{f}\circ\gamma_{\frac{k}{d^n}}\circ \tilde{f}^{-1}$ is a deck transformation for the cover $\hat{\pi}^{c}:\hat \Omega^{c}\to \Omega^{c}$. Since 
\[
\left(\tilde{f}\circ\gamma_{\frac{k}{d^n}}\circ \tilde{f}^{-1}(s,t)\right)_2=e^{2\pi i k/d^n}t,
\]
we have 
$$
\tilde{f}\circ\gamma_{\frac{k}{d^n}}\circ \tilde{f}^{-1}=\gamma_{\frac{k}{d^n}} \text{, i.e., } \tilde{f}\circ\gamma_{\frac{k}{d^n}}=\gamma_{\frac{k}{d^n}}\circ \tilde{f}.
$$ 
%Thus, $\tilde{f}\circ\gamma_{\frac{k}{d^n}}=\gamma_{\frac{k}{d^n}}\circ \tilde{f}$.
 Therefore for any $(s,t)\in \mathbb{C}\times\mathcal{A}^{c}$, we have 
\begin{equation}\label{eq07.1}
    \begin{split}
        \beta\left(e^{2\pi i k/d^n}t\right)& \left(s+\frac{d}{\delta}\sum_{l=0}^{\infty}\left(\frac{d}{\delta}\right)^l\left(Q\left(t^{d^l}\right)-Q\left(\left(e^{2\pi i k/d^n}t\right)^{d^l}\right)\right)\right)+\gamma\left(e^{2\pi i k/d^n}t\right)\\
& =\beta(t)s+\gamma(t)+\frac{d}{\delta}\sum_{l=0}^{\infty}\left(\frac{d}{\delta}\right)^l\left(Q\left(\left(\alpha t\right)^{d^l}\right)-Q\left(\left(e^{2\pi i k/d^n}\alpha t\right)^{d^l}\right)\right).
    \end{split}
\end{equation}
Comparing both sides of \eqref{eq07.1}, we get $\beta(t)=\beta(e^{2\pi i k/d^n} t)$, for all $k,n\geq 1$, which in turn gives $\beta$ is a constant. Hence \eqref{eq07.1} becomes
\begin{equation}\label{eq07.2}
\begin{split}
    \frac{d}{\delta}\sum_{l=0}^{\infty}\left(\frac{d}{\delta}\right)^l\left[\left(\beta Q\left(t^{d^l}\right)-Q\left(\left(\alpha t\right)^{d^l}\right)\right)-\left(\beta Q\left(\left(e^{2\pi ik/d^n} t\right)^{d^l}\right)-Q \right. \right. & \left. \left. \left(\left(e^{2\pi ik/d^n}\alpha t\right)^{d^l}\right)\right)\right]\\
    & =\gamma(t)-\gamma(e^{2\pi i k/d^n}t).
    \end{split}
\end{equation} 
The highest degree term on L.H.S. of \eqref{eq07.2} is 
\[
\left(1-e^{2\pi i k/d}\right)\left(\beta-\alpha^{d^{n-1}(d+1)}\right)t^{d^{n-1}(d+1)}.
\]
Let $k=1$ and fix a $t$. Then R.H.S. of \eqref{eq07.2} is uniformly bounded for $n\geq 1$. Thus, 
\begin{equation}\label{alpha1}
\alpha^{d^{n-1}(d+1)}\to \beta,
\end{equation}
as $n\to \infty$. Also
\begin{equation}\label{alpha2}
\alpha^{d^n(d+1)} \to \beta,
\end{equation}
as $n\to \infty$. Thus, we get
\begin{equation}\label{alpha3}
\alpha^{d^{n-1}(d+1)(d-1)}\to 1,
\end{equation}
as $n\to \infty$. By \eqref{alpha1}, the sequence in \eqref{alpha3} converges to $\beta^{d-1}$. Thus, $\beta^{d-1}=1$. Therefore, $\beta$ is a repelling fixed point of $z\mapsto z^d$. Thus, the sequence $\{(\alpha^{(d+1)})^{d^n}\}_{n\geq 0}$ converges to $\beta$ if and only if it is an eventually constant sequence, i.e., for some $n_0\geq 1$, $\alpha^{d^{n}(d+1)}=\beta$ for all $n\geq n_0$.
Let $1\leq k_1 \leq d^2-1$ be such that
\[
\alpha^{d^{n_0}}=e^{2\pi i k_1/(d^2-1)} \implies \beta=e^{2\pi i k_1/(d-1)}.
\]
Let
\[
\tilde{\alpha}=e^{2\pi i k_1/(d^2-1)}=\alpha^{d^{n_0}}.
\]
Let $1\leq k_2 \leq d^{n_0}(d^2-1)$ be such that
\[
\alpha=e^{2\pi i k_2/d^{n_0}(d^2-1)}.
\]
Thus
\[
\tilde{\alpha}=e^{2\pi i k_2/(d^2-1)}.
\]
Therefore,
\[
\frac{\alpha}{\tilde{\alpha}}=e^{\frac{2\pi i k_2}{d^2-1}\left(\frac{1}{d^{n_0}}-1\right)}.
\]
Since we can choose $n_0$ to be even, $(d^2-1)$ divides $(1-d^{n_0})$. Hence, $\alpha=e^{{2\pi i k_3}/{d^{n_0}}}\tilde{\alpha}$, for some $k_3\geq 1$.
Let 
\[
\tilde{F}=\tilde{f}\circ \gamma^{-1}_{\frac{k_3}{d^{n_0}}}:\mathbb{C}\times \mathcal{A}^{c}\to \mathbb{C}\times \mathcal{A}^{c}.
\] 
Then $\tilde{F}$ is a lift of $f$ and 
\[
F:(s,t)\mapsto(\beta s+\gamma(t),\tilde{\alpha}t),
\]
with $\beta^{d-1}=1$, $\tilde{\alpha}^{d+1}=\beta$. By abuse of notation, we still write $\tilde{F}=\tilde{f}$ and $\tilde{\alpha}=\alpha$.
Since $Q(t)=t^{d+1}+b_{d-1}t^{d-1}+\cdots+b_0$, \eqref{eq07.2} becomes
\begin{equation}
\begin{split}
\frac{d}{\delta}\sum_{l=0}^{n-1}& \left(\frac{d}{\delta}\right)^l\Bigg[\left(1-e^{2\pi i(d+1)/d^{n-l}}\right)\left(\beta-\alpha^{d^l(d+1)}\right)t^{d^l(d+1)}\Bigg.\\
& +b_{d-1}\left(1-e^{2\pi i(d-1)/d^{n-l}}\right)\left(\beta-\alpha^{d^l(d-1)}\right)t^{d^l(d-1)}+\\
& \Bigg. \cdots+b_1\left(1-e^{2\pi i/d^{n-l}}\right)\left(\beta-\alpha^{d^l}\right)t^{d^l}\Bigg] =\gamma(t)-\gamma(e^{2\pi i k/d^n}t).
\end{split}
\end{equation}
We claim that the L.H.S. of the above equation is zero. Since $\alpha^{d+1} = \beta$, the coefficients of $t^{d^l(d+1)}$ are zero, for $0 \le l \le n-1$.
If for some $1\leq i\leq d-1$, $b_{d-i}\neq 0$, then 
\[
\alpha^{d^{n-1}(d-i)}\to \beta
\]
as $n\to \infty$. Now since $\beta$ is a repelling fixed point of $z\mapsto z^d$, there exist an integer $n_1$ such that $\alpha^{d^n(d-i)}=\beta$ for all $n\geq n_1$. Since $\alpha^{d^2}=\alpha$, we have $\alpha^{d^{2n_1}}=\alpha$. Thus, $\alpha^{d-i}=\beta$ which implies $\alpha^{d^n(d-i)}=\beta$ for $n\geq 0$, thus the claim follows. This implies 
\[
\gamma(t)-\gamma(e^{2\pi i k/d^n}t)=0, \text{ for all } k.n\geq 1 
\]
Thus $\gamma \equiv {\rm{const}}$.
Any other lift of $f$ is of the form $\tilde{f}\circ\gamma_{\frac{k}{d^n}}$.  Therefore it follows that corresponding to any automorphism $f$ of $\Omega^c$, there exists a lift $F:(s,t)=(\beta s+\gamma, \alpha t)$ from $\mathbb{C}\times \mathcal{A}^c$ to itself with $\beta^{d-1}=1$ and $\alpha^{d+1}=\beta$.

\medskip 
\no 
(ii) One can run a similar proof as above for this part. Therefore, we skip the details. 
\end{proof}

\begin{rem}
By the work in \cite{Pal}, it follows that a lift $F:\mathbb{C}\times\mathbb{C}\backslash\mathbb{\overline{D}}\to \mathbb{C}\times\mathbb{C}\backslash\mathbb{\overline{D}}$ of any $f\in \mathrm{Aut}(U_H^+)$, which induces identity on the fundamental group of $U_H^+$, is of the form $(s,t)\mapsto (\beta s+\gamma(t),\alpha t)$. Here $\beta^{d-1}=1$ and there exists an $n_0\geq 1$ such that $\alpha^{d^n(d+1)}=\beta$, for all $n\geq n_0$. It follows from the second part of Theorem \ref{lift of UH+} that there exists a lift of $f$ of the form $(s,t)\mapsto (\beta s+\gamma,\alpha t)$ with $\beta^{d-1}=1$ and $\alpha^{d+1}=\beta$. 

%Similar to what we have done in the case of lifts of certain automorphisms of $\Omega^c$, we can compose $F$ with some appropriate deck transformation such that $\alpha^{d^n(d+1)}=\beta$ for all $n\geq 0$ and $\gamma$ is constant.  
\end{rem}
\end{proof}
\section{Proofs of Theorem \ref{Thm2} and Theorem \ref{Thm1} }
\noindent
\subsection*{Proof of Theorem \ref{Thm2}:}
{\it Step 1:} 
In this step, we show that $f$ preserves level sets of the Green's function. For $c>0$, let $\Omega_c=\{z\in \mathbb{C}^2 : G_H^+(z)<c\}$, and let $\Omega_c'=\{z\in \mathbb{C}^2 : 0<G_H^+(z)<c\}$. %Then $\partial\Omega_c =\{ z \in \mathbb{C} : G_H^+(z)=c \}$. 
Let \[\pi_H:\mathbb{C}\times \mathbb{C}\backslash\mathbb{\overline{D}} \to U_H^+\] be the covering map.
Let $\tilde{f}$ be a lift of $f$ to $\mathbb{C}\times \mathbb{C}\backslash \mathbb{\overline{D}}$, which is of the form 
\[
\tilde{f}(s,t)=(\beta s +\gamma , \alpha t),
\]
with constant $\alpha,\beta,\gamma\in \mathbb{C}$ such that $\alpha^{d+1}=\beta$, $\beta^{d-1}=1$. 
Clearly, $\tilde{f}(\mathbb{C}\times \mathcal{A}_c)=\mathbb{C}\times \mathcal{A}_c$, where $\mathcal{A}_c=\{t\in \mathbb{C} : 1<\vert t\vert <e^c\}$. Then 
\begin{equation} \label{fOmegac}
f\circ \pi_H(\mathbb{C}\times \mathcal{A}_c)=\pi_H\circ \tilde{f}(\mathbb{C}\times \mathcal{A}_c)=\pi_H(\mathbb{C}\times \mathcal{A}_c). 
\end{equation}
Now $\pi_H(\mathbb{C}\times \mathcal{A}_c)=\Omega_c'$.
It follows from the construction of $\pi_H$, in particular from the fact that the level sets of B\"{o}ttcher coordinate $\hat{\phi}_H^+: \hat{U}_H^+ \rightarrow \mathbb{C}\times \mathbb{C}\backslash\mathbb{\overline{D}}$ straightens up when it is lifted to $\mathbb{C}\times \mathbb{C}\backslash\mathbb{\overline{D}}$. 
%Since for every $c>0$, $\pi_H(\mathbb{C}\times\mathcal{A}^c)=\Omega^c$, $\pi_H(\mathbb{C}\times\{t\in \mathbb{C}: \vert t\vert =e^c\})=\{z\in \mathbb{C}^2:G_H^+(z)=c\}$.
Thus, from (\ref{fOmegac}), it follows that  $f(\Omega_c')=\Omega_c'$.
Therefore, $f$ acts as an automorphism of $\Omega_c^{'}$. 

\medskip
\noindent
The Green's function $G_H^+$ is pluriharmonic on $U_H^+$.  Thus $G_H^+\circ f$ is pluriharmonic on $U_H^+$. Now the leaves of the foliation of $\partial\Omega_c$ are biholomorphic to $\mathbb{C}$ and dense in $\partial\Omega_c$. Further, $G_H^+\circ f$ is bounded on each leaf since $f(\Omega_c')=\Omega_c'$.  Hence $f$ is identically constant on $\partial\Omega_c$. So, there exists $b>0$ such that $f(\partial\Omega_c)\subset\partial\Omega_b$. Since $f^{-1}$ also induces identity on $\pi_1(U_H^+)$,  there exists $c'>0$ such that $f^{-1}(\partial\Omega_b)\subset \partial\Omega_{c'}$. Let $z\in \partial\Omega_c$, then for some $w\in \partial\Omega_b$, $f^{-1}(w)=z$. Thus $G_H^+\circ f^{-1}(w)=G_H^+(z)=c$, which implies $  f^{-1}(\partial\Omega_b)\subset \partial\Omega_c$. Hence, $f(\partial\Omega_c)=\partial\Omega_b$. Further, since $f|_{\Omega_c^{'}}$ is an automorphism, it follows that $b=c$, i.e., $f(\partial\Omega_c)=\partial\Omega_c$. Thus, 
\[G_H^+\circ f \equiv G_H^+\]
 on $U_H^+$.

 \medskip
 \noindent
{\it Step 2:} 
In this step, we construct an open set $\mathcal{B}_1\subset U_R^+$ such that $f\left(\mathcal{B}_1\right)\subset U_R^+$. Recall from Section~2, for large $M>0$, $U_R^+=\left\{(x,y)\in V_R^+ : \left\lvert \phi_H^+(x,y)\right\rvert>M\max\{R,|x|\}\right\}.$ It follows from \cite[Proposition 5.4]{HOV}, that on $U_R^+\subset V_R^+$, the B\"{o}ttcher function
\[
\phi_H^+(x,y)=u_0(y)+u_1(y)x+\cdots,
\] 
where \[
u_0(y)=y+O\left(\frac{1}{|y|}\right), \text{ and } u_1(y)=-\frac{\delta}{dy^{d-1}}+o\left(\frac{1}{|y|^{d-1}}\right).
\]
Since the map
$(x,t)\mapsto (x,\lambda_H^+(x,t))$
is the inverse of $(x,y)\mapsto (x,t)=(x,\phi_H^+(x,y))$, it follows that
\[
\lambda_H^+(x,t)=t+O\left(\frac{1}{|t|}\right)+\left(\frac{\delta}{d t^{d-1}}+o\left(\frac{1}{|t|^{d-1}}\right)\right)x+\cdots.
\]
By \eqref{psi_H^+},
\[
\psi_H^+(x,y)=\phi_H^+(x,y) \int_0^x \frac{\partial\lambda_H^+}{\partial t}(\xi,t)\mathrm{d}\xi +r(t),
\]
where
\[r(t)=\frac{d}{\delta}\left(\frac{c_1}{t}+\frac{c_2}{t^2}+\cdots\right)+\left(\frac{d}{\delta}\right)^2\left(\frac{c_1}{t^d}+\frac{c_2}{t^{2d}}+\cdots\right)+\cdots.
\]
Thus, for a sufficiently large filtration radius $R>1$, we have
\begin{equation}\label{eq1}
     \psi_H^+(x,y)\sim x y+\kappa \text{ and } \phi_H^+(x,y)\sim y,
\end{equation}
on $U_R^+$, where $\kappa\in \mathbb{C}$ is a constant. Also $\lambda_H^+(x,t)\sim t$ on $\left\{(x,t)\in \mathbb{C}^2:\max\{M|x|,RM\}<|t|\right\}.$
Let 
\[
\mathcal{B}=\left(\Phi_H^+\right)^{-1}\left(\left\{(s,t)\in \mathbb{C}^2:|t|>\rho_1, |s|<\rho_2|t|^2-|\gamma|\right\}\right)\cap U_R^+.
\]
For $(x,y) \in \mathcal{B}$, there exists $(s_0,t_0)=\Phi_H^+(x,y)\in \left(\pi_H\circ \left(\hat{\Phi}_H^+\right)^{-1}\right)^{-1}(x,y)$ such that 
\[
s_0 \approx x y+\kappa \text{ and } t_0\approx y.
\]
Now $\tilde{f}(s_0,t_0)=(\beta s_0 +\gamma,\alpha t_0)\in \mathbb{C}\times \mathbb{C}\backslash\mathbb{\overline{D}}$. Further
\[
|\beta s_0+\gamma|\leq |s_0|+|\gamma|<\rho_2|t|^2-|\gamma|-|\gamma|=\rho_2|t|^2,
\]
and
\[
|\alpha t_0|=|t_0|>\rho_1,
\]
i.e., $\tilde{f}(s_0,t_0)\in \Phi_H^+(U_R^+)$. Thus,  
\begin{align*}
    \pi_H\circ & (\hat{\Phi}_H^+)^{-1}\circ \tilde{f} (s_0,t_0)\in \pi_H\circ (\hat{\Phi}_H^+)^{-1}\circ \Phi_H^+(U_R^+),
    \end{align*}
    which implies
    \begin{align*}
     f\circ \pi_H\circ (\hat{\Phi}_H^+)^{-1}(s_0,t_0) \in \pi_H\circ (\hat{\Phi}_H^+)^{-1}\circ \hat{\Phi}_H^+(\hat U_R^+).
\end{align*}
\noindent
Therefore,  $f(x,y) \in \pi_H(\hat U_R^+)=U^+_R$. Hence $f(\mathcal{B})\subset U_R^+$.

\medskip
\noindent
We choose $R>0$ large such that
\[
 (1-\epsilon_0)|y|<|\phi_H^+(x,y)|<(1+\epsilon_0)|y|
 \]
 for $(x,y)\in U_R^+$.
Let $0<\epsilon<(1-\epsilon_0)\rho_2 R - \left(\left(|\gamma|+|\kappa|\right)/R\right)$ small enough, we claim that 
\[
\mathcal{B}_1=\{(x,y)\in \mathbb{C}^2: |x|<\epsilon, |y|>2\rho_1\} \subset \mathcal{B}.
\]
We choose $\epsilon$ small enough so that $\mathcal{B}_1\subset U_R^+$.
Let $(x,y)\in \mathcal{B}_1$, then $\Phi_H^+(x,y)=(s,t)\approx (xy+\kappa,y)$. Clearly, $|t|>\rho_1$ and 
\begin{align*}
    |xy+\kappa|\leq \epsilon|y|+|\kappa| & <\left((1-\epsilon_0)\rho_2 R - \frac{|\gamma|+|\kappa|}{R}\right)|y|+|\kappa|\\
    & <(1-\epsilon_0)\rho_2R|y|-|\gamma|<\rho_2R|t|-|\gamma|\\
    & <\rho_2|t|^2-|\gamma|.
\end{align*}
Thus $f\left(\mathcal{B}_1\right)\subset U_R^+$.

\medskip
\noindent
{\it Step 4:}
The Green's function $G_H^+$ has logarithmic growth near point at infinity $[0:1:0]$. Thus, for some $M_1>0$ there exists large $R>0$ such that
\[
\log^+|y|-M_1 \leq G_H^+(x,y)\leq \log^+|y|+M_1,
\] 
on $ V_R^+$. Let $z=(x,y) \in \mathcal{B}_1$, then since $f\left(\mathcal{B}_1\right)\subset U_R^+\subset V_R^+$,
\[
\log^+|f_2(z)|-M_1 \leq G_H^+(f(z)) = G_H^+(z) \leq \log^+|y|+M_1.
\] 
This implies,  $|f_2(z)|\leq e^{2 M_1}|y|$. Further, since
 \[
 |f_1(z)| \leq |f_2(z)|\leq e^{2M}|y|,
 \] 
 we can conclude that $f$ has polynomial growth in $\mathcal{B}_1$. Now we can write 
\begin{equation}\label{f_1exp}
f_1(x,y)=\sum_{n=0}^{\infty}\beta'_n(y)x^n,
\end{equation}
where for each $n\geq 0$, $\beta'_n(y)$ is holomorphic in $|y|>2\rho_2$. If we fix a $y$ and let $r<|y|$, then the Cauchy estimate gives
\[
|\beta'_n(y)|\leq \frac{C_1|y|}{r^n},
\]
for some $C_1>0$.
Letting $r\to |y|$, we get
\[
\beta'_n(y)\leq \frac{C_1}{|y|^{n-1}}.
\]
Rearranging the terms of \eqref{f_1exp}, we get 
\[
f_1(x,y)=a_1 x+b_1 y+c_1+\sum_{n=1}^{\infty}\frac{\alpha_n(x)}{y^n},
\]
where $a_1,$ $b_1$, $c_1 \in \mathbb{C}$ and for every $n\geq 1$, $\alpha_n$ is a polynomial in $x$ of degree at most $n+1$. 
Similarly we can prove
\[
f_2(x,y)=a_2 x+b_2 y+c_2+\sum_{n=1}^{\infty}\frac{\beta_n(x)}{y^n},
\]
where $a_2,$ $b_2$, $c_2 \in \mathbb{C}$ and for every $n\geq 1$, $\beta_n$ is a polynomial in $x$ of degree at most $n+1$. 
Thus, for $(x,y)\in \mathcal{B}_1$, 
\begin{equation}\label{eq5.1}
f(x,y)=\left(a_1 x+b_1 y+c_1+\sum_{n=1}^{\infty}\frac{\alpha_n(x)}{y^n},a_2 x+b_2 y+c_2+\sum_{n=1}^{\infty}\frac{\beta_n(x)}{y^n}\right).
\end{equation}

\subsection*{Proof of Theorem \ref{Thm1}:}
Let $g=f\circ H \circ f^{-1}\circ H^{-1}$. Note that $g(U_H^+)=U_H^+$, and $L$ induces identity on the fundamental group $\pi_1(U_H^+)=\mathbb{Z}[1/d]$. Thus, $g$ is of the form \eqref{eq5.1}. Since $g\in \mathrm{Aut}(\mathbb{C}^2)$, $g$ is an affine map. Therefore, 
$$
 f \circ H= g \circ H \circ f
$$
on $\mathbb{C}^2$. Thus by the main theorem of \cite{CD}, we conclude that $f$ is a polynomial automorphism. Further, by \cite[Theorem 1.2]{Pal1} and \cite[Proposition 3.1]{BPV1}, there exist $s_1,s_2\in \mathbb{Z}$ and a linear map $L$ of the form $(x,y) \rightarrow (a x,a^d y)$ with $a^{d^2-1}=1$ such that $f^{s_1}=L\circ H^{s_2}$.

\section{Proof of Corollaries}
\subsection{Proof of Corollary(\ref{Cor1})} Consider $g= f^{-1}\circ F\circ f \in \rm{Aut}(\mathbb{C}^2)$ then $g(U_H^+)=U_H^+$. Hence from Theorem(\ref{Thm1}) it follows that $g^{s_1}=L\circ H^{s_2}$ for some $s_1,s_2\in \mathbb{Z}$ and linear map $L(x,y)=(ax,a^d y)$ with $a^{d^2-1}=1$. Thus $f^{-1}\circ F^{s_1}\circ f=L\circ H^{s_2}$.
If $s_1>0$ and $s_2<0$, then for every $n\geq 1$,
\[
f^{-1} \circ F^{ns_1} \circ f= (L\circ H^{s_2})^n.
\]
Let $(x,y)\in K_H^+\backslash K_H^-$. Since $f(x,y)\in K_F^+$ and $f$ is continuous, $\{f^{-1}\circ F^{ns_1} \circ f(x,y)\}_{n\geq 0}$ is bounded. But since $G_{L \circ H^{s_2}}^+ \equiv G_{H^{s_2}}^+ \equiv (\mathrm{deg}(H))^{-s_2}G_H^-$, $(x,y)\notin K_{L\circ H^{s_2}}^+$. Thus $\{(L\circ H^{s_2})^n(x,y)\}{n\geq 0}$ is unbounded. Therefore, $s_1>0$ and $s_2<0$ is not possible. Similarly, $s_1<0$ and $s_2>0$ is not possible. Further, if $s_1=0$, then $L\circ H^{s_2}\equiv \mathrm{Id}$. Thus $s_2=0$ and $L\equiv \mathrm{Id}$. Similarly, $s_2=0$ implies $s_1=0$.

\subsection{Proof of Corollary(\ref{Cor2.1}):} The proof we present here follows the same idea as in \cite[Theorem A]{CD}. Note that the support of closed positive $(1,1)$ current $f^*(\mu^+_H)$ is $f^{-1}(J_H^+)\subset K_H^+$ and there exists a unique positive closed $(1,1)$ current with support in $K_H^+$ up to multiplication by a positive constant. Thus, for some $\mathcal{C}>0$, 
\[
f^*(\mu_H^+)=\mathcal{C}\mu_H^+,
\]
which gives that the function 
\[
\mathcal{H}=\mathcal{C}G_H^+-G_H^+\circ f,
\]
is pluriharmonic on $\mathbb{C}^2$. So, there exists a holomorphic function $\mathcal{G}$ on $\mathbb{C}^2$ such that real part of $\mathcal{G}$ is $\mathcal{H}$. We claim that $\mathcal{H}\equiv0$. If not, then $\mathcal{G}\not\equiv 0$. And $J_H^+\subset\{\mathcal{H}=0\}=\cup_{r \in \mathbb{R}}\{\mathcal{G}=i r\}$. Let $q$ be a saddle point of $H$, then since $\{q\}$ is compact hyperbolic set for $H$, by the \textit{stable manifold theorem} the stable set 
\[
W^s(q)=\{z\in \mathbb{C}^2 : \lim_{n\to \infty} H^n(z)=q\}
\]
is an immersed complex submanifold biholomorphic to $\mathbb{C}$. Further, \cite[Theorem 1]{BS2} gives $\overline{W^s(q)}=J_H^+$. Thus, $W^s(q)\subset \cup_{r \in \mathbb{R}}\{\mathcal{G}=i r\}$, and hence for some $r\in \mathbb{R}$, $W^s(q)$ is an irreducible component of $\{\mathcal{G}=i r\}$. Hence $W^s(q)$ is closed in $\mathbb{C}^2$. This gives a contradiction since $J_H^+$ is not $C^1-$smooth (see \cite{BK}). Thus we get $G_H^+\circ f = \mathcal{C} G_H^+$. Thus, $f(K_H^+)=K_H^+$ and the result follows from Theorem \ref{Thm1}.

\section{Proof of Theorem(\ref{Thm4})}
Consider a lift  $\tilde{f}:\mathbb{C}\times\mathcal{A}^{c_1}\to \mathbb{C}\times\mathcal{A}^{c_2}$, of $f$, where $\mathcal{A}^{c_1}=\{t\in \mathbb{C}:\vert t\vert >e^{c_1}\}$ and $\mathcal{A}^{c_2}=\{t\in \mathbb{C}:\vert t\vert >e^{c_2}\}$. Then, $\tilde{f}$ is of the form $\tilde{f}(s,t)=\left(\tilde{f}_1(s,t),\tilde{f}_2(t)\right)$, where $\tilde{f}_1(s,t)=\beta(t)s+\gamma(t)$ with $\beta,\gamma:\mathcal{A}^{c_1}\to\mathbb{C}$ holomorphic. Let $\tilde{f}^*(s,t)=\left(\tilde{f}_1^*(s,t),\tilde{f}_2^*(t)\right)$ be the inverse of $\tilde{f}$. Then 
\[
\tilde{f}_2\circ\tilde{f}_2^*=Id|_{\mathcal{A}^{c_1}} \text{\hspace{1mm} and } \tilde{f}_2^*\circ\tilde{f}_2=Id|_{\mathcal{A}^{c_2}}.
\]
Thus $\tilde{f}_2:\mathcal{A}^{c_1}\to\mathcal{A}^{c_2}$ is a biholomorphism. Let $h_1:\mathbb{D}^*\to \mathcal{A}^{c_1}$ and $h_2:\mathcal{A}^{c_2}\to\mathbb{D}^*$ be defined by $h_1(t)=e^{c_1}/t$ and $h_2(t)=e^{c_2}/t$, respectively.
Let $h:\mathbb{D}^*\to \mathbb{D}^*$ is defined by $h(t)=h_2\circ\tilde{f}_2\circ h_1(t)$. Then $h$ a biholomorphism. Define $h(0)=0$. Since $\tilde{f}_2$ has a pole at infinity, $h:\mathbb{D}\to\mathbb{D}$ is an automorphism. Thus, $h(t)=\alpha' t$, for some $|\alpha'|=1$, which implies
\[
\tilde{f}_2(t)=\alpha t,
\]
with $\alpha=e^{c_2-c_1}\alpha'.$
For $k\geq 1$, $n\geq 0$, let $\gamma_{\frac{k}{d^n}}$ be the deck transformation for the cover $\hat{\pi}^{c_1}:\hat \Omega^{c_1}\to \Omega^{c_1}$. Then 
\[
\hat{\pi}^{c_2}\circ\tilde{f}\circ\gamma_{\frac{k}{d^n}}\circ \tilde{f}^{-1}=f\circ\hat{\pi}^{c_1}\circ\gamma_{\frac{k}{d^n}}\circ\tilde{f}^{-1}=f\circ\hat{\pi}^{c_1}\circ\tilde{f}^{-1}=\hat{\pi}^{c_2}.
\]
Thus $\tilde{f}\circ\gamma_{\frac{k}{d^n}}\circ \tilde{f}^{-1}$ is a deck transformation for the cover $\hat{\pi}^{c_2}:\hat \Omega^{c_2}\to \Omega^{c_2}$. Note that the second component of $\tilde{f}\circ\gamma_{\frac{k}{d^n}}\circ \tilde{f}^{-1}$ is $e^{2\pi i k/d^n}t$. Thus $\tilde{f}\circ\gamma_{\frac{k}{d^n}}=\gamma_{\frac{k}{d^n}}\circ \tilde{f}$. Therefore for $(s,t)\in \mathbb{C}\times\mathcal{A}^{c_1}$ \eqref{eq07.1} holds. The same arguments as those in Section~4 show that $\beta(t)\equiv \beta$ with $\beta^{d-1}=1$, and for some $n_0\geq 1$, $\alpha^{d^{n_0}(d+1)}=\beta$. Thus $|\alpha|=1$, which implies $c_1=c_2=c$. Now if we compose $\tilde{f}$ with an appropriate deck transformation then we get a lift $\tilde{F}:\mathbb{C}\times \mathcal{A}^{c}\to \mathbb{C}\times \mathcal{A}^{c}$ of $f$, of the form 
\[
(s,t)\mapsto(\beta s+\gamma,\tilde{\alpha}t),
\]
such that $\beta^{d-1}=1$, $\tilde{\alpha}^{d+1}=\beta$ and $\gamma$ is constant. By abuse of notation, we will write $\tilde{F}=\tilde{f}$ and $\tilde{\alpha}=\alpha$.

\medskip
\noindent
Now for any $c_3>c$,  
\begin{align*}
& \tilde{f}(\mathbb{C}\times \{|t|=e^{c_3}\})=\mathbb{C}\times \{|t|=e^{c_3}\} \\
& \implies \hat \pi^c \circ\hat{\Phi}^{-1}_c\circ\tilde{f}(\mathbb{C}\times \{|t|=e^{c_3}\})=\hat{\pi}^c\circ\hat{\Phi}^{-1}_c(\mathbb{C}\times \{|t|=e^{c_3}\})\\
& \implies f\circ \hat{\pi}^c\circ\hat{\Phi}^{-1}_c(\mathbb{C}\times \{|t|=e^{c_3}\})=\hat{\pi}^c\circ\hat{\Phi}^{-1}_c(\mathbb{C}\times \{|t|=e^{c_3}\})\\
&\implies f(\{G_H^+=c_3\})=\{G_H^+=c_3\}\\
& \implies G_H^+ \circ f \equiv G_H^+ \text{ \hspace{1mm} on } \Omega^c.
\end{align*}
Thus, for every $c_3>c$,
\[
f|_{\Omega^{c_3}}:\Omega^{c_3}\to \Omega^{c_3}
\]
is an automorphism.
Let $\mathcal{B}_{1,c}=\mathcal{B}_1\cap \Omega^c$. Following similar arguments as in {\it Step~3} of the proof of Theorem(\ref{Thm1}), we get $f(\mathcal{B}_{1,c})\subset U_R^+$. Now adopting arguments from {\it Step~4} of the proof of Theorem(\ref{Thm1}), we can prove that $f$ has polynomial growth in $\mathcal{B}_{1,c}$, which further gives
\[
f(x,y)=\left(a_1 x+c_1 + \sum_{n=1}^{\infty}\frac{\alpha_n(x)}{y^n},a_2 x+b_2 y+c_2+\sum_{n=1}^{\infty}\frac{\beta_n(x)}{y^n}\right),
\]
on $\mathcal{B}_{1,c}$, with $a_1,a_2,b_1,b_2,c_1,c_2\in \mathbb{C}$ and for $n\geq 1$, $\alpha_n$ and $\beta_n$ polynomials in $x$ of degree at most $n+1$.

\medskip
\no
Now, let $f\in \mathrm{Aut}(\mathbb{C}^2)$. Since $G_H^+\circ f= G_H^+$ on $\Omega^c\implies G_H^+\circ f= G_H^+$ on $\mathbb{C}^2$. Thus $f(U_H^+)=U_H^+$ and the result follows from \ref{Thm1}.

\bibliographystyle{amsplain}
%\bibliography{ref_rigidity}

\end{document}